\renewenvironment{proof}[1][\proofname]{%
   \par\pushQED{\qed}\normalfont%
   \topsep6\p@\@plus6\p@\relax
   \trivlist\item[\hskip\labelsep\bfseries#1\@addpunct{.}]%
   \ignorespaces
}{%
   \popQED\endtrivlist\@endpefalse
}
\newtheorem{theorem}{Theorem}
\newtheorem{proposition}[theorem]{Proposition}
 \numberwithin{theorem}{section}
 \newtheorem{corollary}[theorem]{Corollary}
\numberwithin{equation}{section}
\newtheorem{thmx}{Theorem}
\newcommand{\lL}{\mathsf{L}}
\newcommand{\rR}{\mathsf{R}}
\renewcommand{\P}{\mathbb{P}}
\newcommand{\E}{\mathbb{E}}
\newcommand{\R}{\mathbb{R}}
\newcommand{\mN}{\mathcal{N}}
\newcommand{\N}{\mathbb{N}}
\newcommand{\cF}{\mathcal F}
\newcommand{\cZ}{\mathcal Z}
\newcommand{\cW}{\mathcal W}
\newcommand{\eps}{\varepsilon}
 \newcommand{\no}{\noindent}
\begin{document}
\author{
Jieliang Hong\footnote{Department of Mathematics, University of British Columbia, Canada, E-mail: {\tt jlhong@math.ubc.ca} }
}
\title{Improved H\"{o}lder continuity near the boundary of one-dimensional super-Brownian motion}
\date{\today}
\maketitle
\begin{abstract}
    We show that the local time of one-dimensional super-Brownian motion is locally $\gamma$-H\"older continuous near the boundary if $0<\gamma<3$ and fails to be locally $\gamma$-H\"older continuous if $\gamma>3$.
\end{abstract}

\section{Introduction}

Let $M_F=M_F(\R^d)$ be the space of finite measures on $(\R^d,\mathfrak{B}(\R^d))$ equipped with the topology of weak convergence of measures, and write $\mu(\phi)=\int \phi(x) \mu(dx)$ for $\mu \in M_F$. Let $(\Omega,\cF, (\cF_t)_{t\geq 0},P)$ be a filtered probability space. A super-Brownian motion $(X_t,\ t\geq 0)$ starting at $\mu \in M_F$ is a continuous $M_F$-valued strong $(\cF_t)_{t\geq 0}$-Markov process defined on $(\Omega,\cF,(\cF_t)_{t\geq 0},P)$ with $X_0=\mu$ a.s.. It is well known that super-Brownian motion is the solution to the following $\mathit{martingale\ problem}$ (see [Per02], II.5): For any $ \phi \in C_b^2(\R^d)$, 
\begin{equation} \label{e1.0}
X_t(\phi)=X_0(\phi)+M_t(\phi)+\int_0^t X_s(\frac{\Delta}{2}\phi) ds,\ \forall t\geq 0,
\end{equation} where $(M_t(\phi))_{t\geq 0}$ is a continuous $(\cF_t)_{t\geq 0}$-martingale such that $M_0(\phi)=0$ and  \[[M(\phi)]_t=\int_0^t X_s(\phi^2) ds,\ \forall t\geq 0.\]
The above martingale problem uniquely characterizes the law $\P_{X_0}$ of super-Brownian motion $X$, starting from $X_0 \in M_F$, on $C([0,\infty),M_F)$, the space of continuous functions from $[0,\infty)$ to $M_F$ furnished with the compact open topology. In particular, if we let $X_0$ be the Dirac mass $\delta_0$, then $\P_{\delta_0}$ denotes the law of super-Brownian motion $X$ starting from $\delta_0$.\\

Local times of superprocesses have been studied by many authors (cf. [Sug89], [BEP91], [AL92], [Kro93], [Mer06]). We recall that [Sug89] has proved that for $d\leq 3$, there exists a jointly lower semi-continuous local time $L_t^x$, which is monotone increasing in $t$ for all $x$, such that
\begin{equation} \label{e1.2}
\int_0^t X_s(\phi) ds =\int_{\R^d}  \phi(x) L_t^x dx, \text{ for all } t\geq 0 \text{ and  non-negative measurable }\phi.
\end{equation} 
Moreover, there is a version of the local time $L_t^x$ which is jointly continuous on the set of continuity points of $X_0 q_t(x)$, where $q_t(x)=\int_0^t p_s(x)ds$, $p_t(x)$ is the transition density of Brownian motion, and $X_0 q_t(x)=\int  q_t(y-x) X_0(dy)$ (see Theorem 3 in [Sug89]). Let the extinction time $\zeta$ of $X$ be defined as $\zeta=\zeta_X=\inf \{t\geq 0: X_t(1)=0\}$. We know that $\zeta<\infty$ a.s. (see Chp. II.5 in [Per02]). Then we have $L^x=L_{\infty}^x=L_\zeta^x$ is also lower semicontinuous. Note the set $\overline{\{x: L^x>0\}}$ is defined to be the range of super-Brownian motion $X$ (see [MP17]). Theorem 2.2 of [MP17] gives that for any $\eta>0$, with $\P_{\delta_0}$-probability one we have $L^x$ is $C^{(4-d)/2-\eta}$-H\"older continuous for $x$ away from $0$ if $d\leq 3$. When $d=1$, $L^x$ is globally continuous (see Proposition 3.1 in [Sug89]).\\

\noindent $\mathbf{Definition.}$ A function $f: \R \to \R$ is said to be locally $\gamma$-H\"older continuous at $x\in \R$, if there exist $\delta>0$ and $c>0$ such that
\[|f(x)-f(y)|\leq c|x-y|^\gamma, \ \forall y \text{ with } |y-x|<\delta.\]
We refer to $\gamma>0$ as the H\"older index and to $c > 0$ as the H\"older constant.\\

The problem studied in this paper was originally motivated by a heuristic calculation of the Hausdorff dimension, $d_f$, of the boundary of $\{x : L^x>0\}$ in [MP17]. With the following bounds given in Theorem 1.3 of [MP17],
\[\P_{\delta_0}(0<L^x\leq a) \leq Ca^{\alpha} \text{ for } a \text{ small,} \]
and an improved $\gamma$-H\"older continuity of $L^x$ for $x$ near its zero set, the two authors derived the upper bound $d_f\leq d-\alpha \gamma$ by a heuristic covering lemma in Section 1 of the same reference. Although these arguments were given for $d = 3$, they work in any dimension. As $d_f$ and $\alpha$ are
known from [MP17], one can reverse engineer and find the required $\gamma$. This leads to their
conjecture [private communication] that for any $\eta>0$, with $\P_{\delta_0}$-probability one
\begin{equation}
 x\to L^x\ \text{ is locally H\"older continuous of index } 4-d-\eta\ \text{ near the zero set of } L^x.
 \end{equation}
In [MP17] they reported that they can establish the above for $d = 3$ (and make the
argument for the upper bound on $d_f$ work). In this paper we confirm the above
conjecture for $d = 1$, as stated in Theorem 1.1 below. This result also gives us confidence on the validity of the $d = 2$ case, which remains an interesting open problem.\\


\noindent  To state our main results, we first recall a result from Theorem 1.7 in [MP17].

\begin{thmx}([MP17])
If $d=1$ then $\P_{\delta_0}$-a.s. there are random variables $\lL<0<\rR$ such that 
\[\{x: L^x>0\}=(\lL,\rR).\]
\end{thmx}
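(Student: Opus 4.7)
The plan is to reduce the statement to the interval-support property of $1$-dimensional super-Brownian motion. For $d=1$, $X_t$ admits a jointly continuous density $u(t,x)$ for $t>0$ (Konno--Shiga), so by \eqref{e1.2},
\[
L^x = \int_0^\zeta u(t,x)\,dt,\qquad \{x:L^x>0\} \;=\; \bigcup_{0<t<\zeta} S_t,\qquad S_t:=\{x:u(t,x)>0\}.
\]
Global continuity of $L^x$ on $\R$ (Proposition 3.1 of [Sug89]) makes this set open; the task is to show it is a single open interval straddling $0$.

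The first key input is Tribe's theorem for $d=1$: every $S_t$ with $t\in(0,\zeta)$ is a nonempty open interval $S_t=(\ell_t,r_t)$, and the boundary processes $\ell_t,r_t$ are continuous on $(0,\zeta)$. Since $X_0=\delta_0$, standard spreading estimates give $0\in S_t$ for all sufficiently small $t>0$, so $\ell_t<0<r_t$ there. Set $\lL:=\inf_{0<t<\zeta}\ell_t$ and $\rR:=\sup_{0<t<\zeta}r_t$; then $\lL<0<\rR$, and the inclusion $\{L^x>0\}\subset(\lL,\rR)$ is immediate.

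The remaining step is $(\lL,\rR)\subset\{L^x>0\}$, which is an intermediate-value argument on the boundary processes. Fix $x\in(0,\rR)$ (the case $x\in(\lL,0)$ is symmetric, and $x=0$ is handled above). Define $\tau:=\inf\{t>0:\ell_t\geq x\}\in(0,\zeta]$, so $\ell_t<x$ on $(0,\tau)$. Assume for contradiction that $r_t\leq x$ for every $t\in(0,\tau)$. If $\tau<\zeta$, then $\ell_\tau=x$ together with $S_\tau$ being a nonempty open interval forces $r_\tau>x$; continuity of $r$ at $\tau$ then yields $r_{\tau-\varepsilon}>x$ for small $\varepsilon>0$, contradicting the assumption. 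If instead $\tau\geq\zeta$, then $r_t\leq x$ holds on all of $(0,\zeta)$, giving $\rR\leq x$, contradicting $x<\rR$. Hence there exists $t_0\in(0,\tau)$ with $\ell_{t_0}<x<r_{t_0}$, and so $x\in S_{t_0}\subset\{L^x>0\}$.

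The main obstacle is not this intermediate-value deduction but the input Tribe-type theorem: that the instantaneous support of 1D SBM is always a connected interval with continuously moving boundary. This is a genuinely one-dimensional fact, tied to the SPDE $\partial_t u=\tfrac12\Delta u+\sqrt{u}\,\dot W$ satisfied by the density, and it is precisely the reason the analogous boundary question in dimension $d=2$ remains open.
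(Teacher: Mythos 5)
This statement is Theorem~1.7 of [MP17], cited here as Theorem~A; the present paper does not reprove it but takes it as input (indeed, the proof of Theorem~\ref{t4} in Section~4 uses Theorem~A to get the $\N_0$ version, and the proof of Theorem~\ref{t2} in Section~3 borrows quantitative estimates from the proof of [MP17, Theorem~1.7]). So there is no ``paper's own proof'' to compare against, and I evaluate your attempt on its own.

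Your argument hinges entirely on the ``Tribe-type theorem'': that for every $t\in(0,\zeta)$ simultaneously the positivity set $S_t=\{x:u(t,x)>0\}$ of the density is a single open interval $(\ell_t,r_t)$, with $\ell_t,r_t$ continuous in $t$. This is the crux of your proof, and you provide neither a proof nor a precise citation. I am not aware of any theorem of Tribe (or anyone else) asserting that the instantaneous support of one-dimensional super-Brownian motion is almost surely connected; the density $u(t,\cdot)$ of $1$d SBM is a solution to $\partial_t u=\tfrac12\Delta u+\sqrt{u}\,\dot W$, and there is no known reason why $u(t,\cdot)$ cannot vanish at an interior point of its support at a fixed time, which would disconnect $S_t$. (The existence of work by Tribe on the \emph{connected components} of the closed support of SBM already reflects that disconnectedness is a live possibility.) Moreover, even granting connectedness, you additionally need continuity of $(\ell_t,r_t)$ jointly over the whole excursion $(0,\zeta)$, including through any times where pieces of support could merge or split — another unproved assertion your intermediate-value step relies on essentially. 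If such a Tribe-type theorem were available, [MP17] would presumably have invoked it rather than proving Theorem~1.7 through a separate machinery, which is a strong heuristic signal that it is not.

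The route [MP17] actually takes — visible in Section~3 of this paper — avoids instantaneous supports entirely and works with exit measures. One sets $Y_r=X_{(-\infty,r)}(1)$ (a stable CSBP of index $3/2$ in $r$ under $\P_{y_0\delta_0}$), defines $\rR=\inf\{r\ge 0: Y_r=0\}$ and $R_n=\inf\{r:Y_r\le 2^{-n}\}\uparrow\rR$, and proves a quantitative lower bound of the form $\inf_{0<x<R_n}L^x>2^{-n\beta}$ eventually a.s. (this is exactly \eqref{e2.0}). Since $Y_r=0$ for $r>\rR$ forces $L^x=0$ for $x>\rR$, and the symmetric construction handles $\lL$, one gets $\{x:L^x>0\}=(\lL,\rR)$ with $\lL<0<\rR$. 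This argument is a statement about cumulative mass flow and the Markov/branching structure of the exit measures, and is robust against any pathology of the fixed-time support topology. Your proposal is not a correct alternative as written; to salvage it you would have to supply a proof of the Tribe-type theorem, which is at least as hard as (and, I suspect, strictly harder or false compared to) the statement you are trying to prove.
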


\noindent As discussed above, we are interested in the decay rate of the local time $L^x$ on the boundary, i.e., at $\lL$ and $\rR$. \\

\begin{theorem}\label{t1}
Let $d=1$. If $0<\gamma<3$, then $\P_{\delta_0}$-a.s. the local time $L^x$ is locally $\gamma$-H\"older continuous at $\lL$ and $\rR$.
 \end{theorem}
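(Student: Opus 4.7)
Since $\{L^x>0\}=(\lL,\rR)$ and $L^x$ is lower semicontinuous, one has $L^{\rR}=0$. By symmetry it suffices to prove the bound at $\rR$: exhibit random $\delta,C>0$ with $L^x\leq C(\rR-x)^\gamma$ for $x\in(\rR-\delta,\rR)$. My plan is a dyadic Borel-Cantelli argument. Fix $\gamma<\gamma'<3$ and set $r_n=2^{-n}$. I claim it is enough to prove
\begin{equation*}
\sum_n \P_{\delta_0}\bigl(\exists\, x\in[\rR-2r_n,\rR-r_n]\text{ with }L^x>r_n^{\gamma'}\bigr)<\infty.
\end{equation*}
Indeed, by Borel-Cantelli, eventually $L^x\leq r_n^{\gamma'}$ on the $n$-th dyadic annulus, while $(\rR-x)^\gamma\geq r_n^\gamma\geq r_n^{\gamma'}$ on the same annulus for large $n$ (using $\gamma<\gamma'$ and $r_n\leq 1$). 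Hence $L^x\leq(\rR-x)^\gamma$ for $x$ close to $\rR$.

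The central analytic input is a moment bound
\begin{equation*}
\E_{\delta_0}\bigl[(L^x)^p\,\mathbf{1}\{\rR\in(x,x+r]\}\bigr]\leq C_p\,r^{3p+\beta},\qquad \beta>0,\ p\text{ large}.
\end{equation*}
The exponent $3p$ should be contrasted with the interior exponent $(3/2)p$ that underlies the $3/2-\eta$ Hölder regularity of [MP17]; the doubling reflects that near the right boundary, $L^x$ is doubly suppressed --- once by pointwise smallness and once by the rarity of mass reaching near $\rR$. Granted this bound, I cover the annulus $[\rR-2r_n,\rR-r_n]$ by $O(r_n^{-1})$ points spaced $\asymp r_n^2$ apart, control the oscillation between consecutive points by the interior $3/2-\eta$ Hölder bound of [MP17] (valid away from $0$, which is fine since $\rR>0$ a.s.), and apply Markov's inequality at each point. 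The resulting bound is $\P(\cdots)\leq C_p\,r_n^{\,p(3-\gamma')+\beta-1}$, summable once $p$ is large enough.

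To establish the moment bound I would use the log-Laplace machinery. The event $\{\rR\leq y\}$ equals $\{X_t([y,\infty))=0\text{ for all }t\geq 0\}$, and its probability is governed by the stationary solution $V_\infty(\cdot\,;\,y)$ of the dual equation $\tfrac12 V''=V^2$ with pole at $y$, which in one dimension takes the explicit form $V_\infty(z;y)=6/(y-z)^2$. An $h$-transform by $V_\infty$ would convert the conditioning $\{\rR\leq y\}$ into an absolutely continuous change of measure under which the density $u_s(x)$ of $X_s$ satisfies a modified SPDE with drift confining its support to $(-\infty,y]$. Pointwise moment bounds for $u_s(x)$ under the new measure, combined with extinction-time bounds that restrict the range of integration in $L^x=\int_0^\zeta u_s(x)\,ds$, should then yield the required estimate.

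The main obstacle is extracting the sharp exponent $3$. One factor of $3/2$ comes essentially for free from the pointwise regularity available even unconditionally; the second $3/2$ must be recovered from the quantitative vanishing of the $h$-transformed density as $x\to y^-$. Making this heuristic quantitative --- uniformly in the barrier location $y$ and stable under the Radon-Nikodym derivative from the $h$-transform --- is the delicate step, and is exactly where the gap between $3/2$ and the conjectured index $3$ must be closed. An alternative route worth trying would go through the Brownian snake: conditioning on $\{\rR\leq y\}$ becomes a constraint on path maxima, and the $3$-regularity could plausibly be extracted from direct estimates for snakes conditioned to stay below a barrier, bypassing the need for an explicit $h$-transform on the measure level.
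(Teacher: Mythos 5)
Your proposal takes a genuinely different route from the paper, but it has a serious gap: the entire argument hinges on the moment bound
\begin{equation*}
\E_{\delta_0}\bigl[(L^x)^p\,\mathbf{1}\{\rR\in(x,x+r]\}\bigr]\leq C_p\,r^{3p+\beta},
\end{equation*}
which you state but never establish. The reduction via Borel--Cantelli, dyadic annuli, a cover on the scale $r_n^2$, and Markov's inequality is mechanically sound once that bound is in hand, but extracting the sharp exponent $3p$ is precisely the theorem --- and you concede as much in your closing paragraph. The $h$-transform and Brownian-snake routes you sketch are plausible program outlines, not proofs; in particular it is not clear how one would get the conditioned density to vanish at the boundary at a \emph{quantitative, uniform} rate strong enough to recover the second factor of $3/2$, nor that any of these steps closes in one pass rather than requiring its own recursion.

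The paper avoids this obstacle entirely by a bootstrap rather than a single sharp estimate. Concretely, it proves a Tanaka formula for the spatial derivative (Proposition 2.2), which at $t=\zeta$ gives $L'(x)=-1-\int_0^\infty\!\int \mathrm{sgn}(x-z)\,M(dz\,ds)$, so that $L'(x)-L'(y)=N^{x,y}_\infty$ is a stochastic integral with the explicit quadratic variation $[N^{x,y}]_\infty=4\,|\int_x^y L^z\,dz|$. This creates a closed loop: a crude H\"older bound $L^z\lesssim |\rR-z|^{\xi_0}$ near $\rR$ controls $[N^{x,y}]_\infty$, hence (via Dubins--Schwarz and a chaining argument) controls the increments of $L'$, and since $L'(\rR)=0$ this yields smallness of $L'$ itself near $\rR$; integrating $L'$ then recovers H\"older continuity of $L$ with the improved exponent $\xi_1\approx(3+\xi_0)/2$ (Theorem 2.3). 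Starting from $\xi_0=1/2$, which is available off the shelf from [MP17, Theorem~2.2], the iteration $\xi_{n+1}\approx(3+\xi_n)/2$ climbs to any $\gamma<3$ in finitely many steps (Corollary 2.4). Each step uses only elementary Gaussian tail bounds on a time-changed Brownian motion --- no $p$-th moment of $L^x$ on the event $\{\rR\in(x,x+r]\}$ is ever needed. If you want to pursue your moment-bound route, the honest comparison is that you are attempting to prove in one shot the endpoint of the paper's recursion, which is why the gap you flag at exponent $3$ is not a technicality but the whole content of the result.

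One minor point: you do not need lower semicontinuity to conclude $L^{\rR}=0$; it is immediate from $\rR\notin\{x:L^x>0\}=(\lL,\rR)$.
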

 
\noindent This result will be proved in Section 2 and it is optimal in the sense of the following theorem, whose proof will be given in Section 3.

\begin{theorem}\label{t2}
Let $d=1$. For any $\gamma>3$, we have $\P_{\delta_0}$-a.s. that there is some $\delta(\gamma,\omega)>0$ such that $L^x\geq 2^{-\gamma/2} (\rR-x)^\gamma$ for all $\rR-\delta <x<\rR$.
 \end{theorem}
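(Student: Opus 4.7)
The plan is to prove Theorem~\ref{t2} by a Borel--Cantelli argument along a dyadic family of grids approaching $\rR$, combining two inputs: the small-ball estimate $\P_{\delta_0}(0 < L^x \leq a) \leq Ca^\alpha$ from Theorem~1.3 of [MP17], and the local H\"older regularity of $L^x$ near $\rR$ with some sub-critical index $\gamma' \in (0,3)$ supplied by Theorem~\ref{t1}. The value $\gamma = 3$ is the natural threshold because it matches the expected boundary scaling $L^x \asymp (\rR-x)^3$ underlying both inputs.

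Fix $\gamma > 3$ and pick $\gamma' \in (3\gamma/(3+\gamma), 3)$, a nonempty interval. Partition a one-sided neighbourhood of $\rR$ by the annuli $I_n = [\rR - 2^{-n+1}, \rR - 2^{-n}]$, so $\bigcup_{n\geq N} I_n = (\rR - 2^{-N+1}, \rR)$. For $x \in I_n$ we have $(\rR-x)^\gamma \leq 2^{\gamma-n\gamma}$, so it is enough to show the uniform lower bound $L^x \geq 2^{\gamma/2-n\gamma}$ on every $I_n$ for $n \geq N_0(\omega)$. Working on a high-probability conditioning event $\{|\rR|\leq N,\ K\leq K_0\}$ (with $K$ the random H\"older constant of $L^x$ on $(\rR-\delta_0, \rR)$), set $m_n = \lceil n\gamma/\gamma'\rceil$ and $c = 2^{\gamma/2}+K_0$, and introduce the bad events
\[
E_{n,k} = \bigl\{0 < L^{k 2^{-m_n}} \leq c\cdot 2^{-n\gamma}\bigr\}.
\]
Each $\P(E_{n,k}) \leq C'\cdot 2^{-n\gamma\alpha}$ by the [MP17] bound. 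If $\sum_n \P(\bigcup_k E_{n,k}) < \infty$, then Borel--Cantelli gives almost surely that no $E_{n,k}$ occurs for $n \geq N_1(\omega)$. For any $x \in I_n$, taking $q$ to be the nearest dyadic point at scale $m_n$ with $q \leq x$ gives $|x-q| \leq 2^{-m_n}$ and $q < \rR$, so $L^q > c\cdot 2^{-n\gamma}$; the H\"older estimate $|L^x - L^q| \leq K_0\cdot 2^{-m_n\gamma'} \leq K_0\cdot 2^{-n\gamma}$ then yields
\[
L^x \geq L^q - K_0\cdot 2^{-n\gamma} \geq (c-K_0)\cdot 2^{-n\gamma} = 2^{\gamma/2-n\gamma},
\]
as required. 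Setting $\delta = 2^{-N_1+1}$ completes the argument on the conditioning event, and the full statement follows by exhaustion.

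The technical heart of the proof is the summability of $\sum_n \P(\bigcup_k E_{n,k})$. A crude union bound over $|k|\leq N 2^{m_n}$ produces $\lesssim 2^{n\gamma(1/\gamma' - \alpha)}$, which at the sharp exponent $\alpha = 1/3$ fails to be summable for any $\gamma' < 3$. The key refinement is that $E_{n,k}$ can contribute only when the dyadic point $k 2^{-m_n}$ sits within $2^{-n+1}$ of $\rR$, leaving only $O(2^{m_n - n})$ effective candidates per realization; this would improve the bound to $2^{n(\gamma/\gamma' - 1 - \gamma\alpha)}$, which is summable precisely because we chose $\gamma' > 3\gamma/(3+\gamma)$ at $\alpha = 1/3$. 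Making this localization rigorous is the hardest step and appears to require a joint small-ball estimate of the form
\[
\P_{\delta_0}\bigl(0 < L^x \leq a,\ \rR - x \in (0,b]\bigr) \leq C\cdot a^{\alpha_1}\cdot b^{\alpha_2}
\]
with exponents encoding the near-boundary scaling $L^x\asymp(\rR-x)^3$. I expect this joint bound to be derived either from a conditional strong Markov or spatial Markov decomposition of the SBM at its right frontier, or from a direct analysis of the SPDE for the density of one-dimensional SBM combined with a sharp distributional bound on $\rR$ near a given point.
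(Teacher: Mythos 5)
Your plan is a genuinely different route from the paper's, but it has a gap that you yourself flag and do not close: the Borel--Cantelli step requires a \emph{joint} small-ball estimate of the form $\P_{\delta_0}(0<L^x\leq a,\ \rR-x\in(0,b])\leq C\,a^{\alpha_1}b^{\alpha_2}$, and this is neither proved nor cited. As you correctly observe, the crude union bound (using only the marginal bound $\P_{\delta_0}(0<L^x\leq a)\leq Ca^\alpha$ from Theorem~1.3 of [MP17]) produces $\sum_n 2^{n\gamma(1/\gamma'-\alpha)}$, which is not summable for any $\gamma'<3$ when $\alpha\leq 1/3$; the proposed localization to $O(2^{m_n-n})$ dyadic points near $\rR$ is $\omega$-dependent and cannot be inserted into a union bound without precisely the joint estimate. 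That estimate is the entire difficulty, so the argument as written is incomplete rather than merely sketchy. (A secondary, smaller point: the conjectured Markov decomposition of super-Brownian motion ``at its right frontier'' is not a standard tool and would itself require substantial work.)

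The paper avoids this obstacle entirely by working with the exit measures $Y_r=X_{(-\infty,r)}(1)$, which under $\P_{y_0\delta_0}$ form a $3/2$-stable continuous-state branching process, in particular a spectrally positive martingale with the explicit extinction law $\P_{y\delta_0}(Y_r=0)=\exp(-6y/r^2)$. Two inputs do the work: (a) from the proof of Theorem~1.7 in [MP17], for any $\beta>3/2$ one has $\inf_{0<x<R_n}L^x>2^{-n\beta}$ eventually, where $R_n=\inf\{r:Y_r\leq 2^{-n}\}$; and (b) since $Y$ has no negative jumps, $Y_{R_n}=2^{-n}$, and the strong Markov property together with the extinction formula gives $\P_{\delta_0}(\rR-R_n>2^{-n\xi})\leq 6\cdot 2^{-n(1-2\xi)}$, whence $\rR-R_n\leq 2^{-n\xi}$ eventually by Borel--Cantelli, for any $\xi<1/2$. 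Choosing $\xi$ with $\gamma\xi>3/2$ and setting $\beta=\gamma\xi$ then combines (a) and (b) directly, sidestepping any need to estimate the joint law of $L^x$ and $\rR$. If you want to rescue your dyadic-grid strategy, the missing ingredient you would need is effectively equivalent to the control on $\rR-R_n$ that the CSBP structure of the exit measure hands you for free; proving it by other means (SPDE or spatial-Markov analysis at the frontier) would be considerably harder than the exit-measure route.
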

\noindent With the lower bound established above, we can get the following result immediately.
\begin{corollary}\label{t3}
Let $d=1$. If $\gamma>3$, then $\P_{\delta_0}$-a.s. the local time $L^x$ fails to be locally $\gamma$-H\"older continuous at $\lL$ and $\rR$.
 \end{corollary}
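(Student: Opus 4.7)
The corollary is a direct contradiction argument combining the pointwise lower bound in Theorem \ref{t2} with the definition of local H\"older continuity. My plan is to argue at $\rR$; the argument at $\lL$ will be identical after a reflection (or by symmetry of the statement of Theorem \ref{t2}).

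First I would fix an arbitrary $\gamma>3$ and choose an intermediate exponent $\gamma'$ with $3<\gamma'<\gamma$; this is the key step enabling the argument, so I want to do it up front. By Theorem \ref{t2} applied with exponent $\gamma'$, there exists a (random) $\delta=\delta(\gamma',\omega)>0$ with
\[
L^x \;\geq\; 2^{-\gamma'/2}(\rR-x)^{\gamma'} \qquad \text{for all } \rR-\delta<x<\rR,
\]
on a set of $\P_{\delta_0}$-probability one. Next, I would note that by Theorem A we have $\rR\notin\{x:L^x>0\}$, so $L^{\rR}=0$ (no continuity is actually needed for this step, though the global continuity of $L^x$ for $d=1$ recalled in the introduction makes the boundary value transparent).

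Now I would suppose for contradiction that $L^x$ is locally $\gamma$-H\"older continuous at $\rR$. Then there would be constants $c>0$ and $\delta_0>0$ such that
\[
L^x \;=\; |L^x-L^{\rR}| \;\leq\; c(\rR-x)^{\gamma} \qquad \text{for all } \rR-\delta_0<x<\rR.
\]
Combining with the lower bound from Theorem \ref{t2} on the overlap $\rR-\min(\delta,\delta_0)<x<\rR$ yields
\[
2^{-\gamma'/2} \;\leq\; c\,(\rR-x)^{\gamma-\gamma'}.
\]
Since $\gamma-\gamma'>0$, letting $x\uparrow\rR$ forces the right-hand side to $0$, contradicting the strictly positive left-hand side. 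This completes the argument at $\rR$, and the same reasoning, with $\rR-x$ replaced by $x-\lL$ and Theorem \ref{t2} invoked in its analogous form at the left endpoint, handles $\lL$.

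There is no real obstacle here: the only nontrivial ingredient is the pointwise lower bound of Theorem \ref{t2}, and the mild care required is (i) to use an intermediate exponent $\gamma'\in(3,\gamma)$ rather than $\gamma$ itself, so that the difference of powers $\gamma-\gamma'$ is strictly positive, and (ii) to invoke Theorem A to identify $L^{\rR}$ with $0$ so that the H\"older condition becomes a one-sided upper bound on $L^x$ that can be directly compared with the lower bound.
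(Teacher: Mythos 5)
Your proposal is correct and follows essentially the same argument as the paper: choose an intermediate exponent $\gamma'\in(3,\gamma)$, invoke Theorem \ref{t2} with $\gamma'$ to get the lower bound $L^x\geq 2^{-\gamma'/2}(\rR-x)^{\gamma'}$, and observe that since $\gamma-\gamma'>0$ this lower bound eventually dominates $c(\rR-x)^\gamma$ for any fixed $c$, so local $\gamma$-H\"older continuity at $\rR$ (and, by symmetry, at $\lL$) must fail. The paper simply picks the concrete choice $\gamma'=(3+\gamma)/2$ and phrases the conclusion directly as an inequality rather than as an explicit contradiction, but the content is identical.
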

 
 \begin{proof}
By symmetry we may consider only $\rR$. For any $\gamma>3$, define $\gamma'=(3+\gamma)/2$ such that $3<\gamma'<\gamma$.  Then Theorem \ref{t2} would imply that $\P_{\delta_0}$-a.s. that there is some $\delta(\gamma',\omega)>0$ such that $L^x\geq 2^{-\gamma'/2} (\rR-x)^{\gamma'}$ for all $\rR-\delta <x<\rR$. For $\omega$ as above and
$c>0$, if $x<\rR$ is chosen close enough to $\rR$, then
\[L^x\geq 2^{-{\gamma'}/2}(\rR-x)^{\gamma'}> c(\rR-x)^\gamma,\]
and so the local $\gamma$-H\"older continuity at $\rR$ fails a.s..
\end{proof}

Now we continue to study the case under the canonical measure $\N_0$. $\N_{x_0}$ is a $\sigma$-finite measure on $C([0,\infty),M_F)$ which arises as the weak limit of $N P_{\delta_{x_0}/N}^N (X_\cdot^N \in \cdot)$ as $N \to \infty$, where $X_\cdot^N$ under $P_{\delta_{x_0}/N}^N$ is the approximating branching particle system starting from a single particle at $x_0$ (see Theorem II.7.3(a) in [Per02]). In this way it describes the contribution of a cluster from a single ancestor at $x_0$, and the super-Brownian motion is then obtained by a Poisson superposition of such clusters. In fact,  if we let
$\Xi=\sum_{i\in I}\delta_{\nu^i}$ be a Poisson point process on $C([0,\infty),M_F)$ with intensity $\N_{x_0}(d\nu)$, then

\begin{equation*}
X_t=\sum_{i\in I}\nu_t^i=\int \nu_t\ \Xi(d\nu),\ t>0, 
\end{equation*}
has the law, $\P_{\delta_{x_0}}$, of a super-Brownian motion $X$ starting from $\delta_{x_0}$. We refer the readers to Theorem II.7.3(c) in [Per02] for more details. The existence of the local time $L^x$ under $\N_{x_0}$ will follow from this decomposition and the existence under $\P_{\delta_{x_0}}$. Therefore the local time $L^x$ may be decomposed as 
\begin{align}\label{e1.21}
L^x=\sum_{i\in I}L^x(\nu^i)=\int L^x(\nu)\Xi(d\nu).
\end{align}
The continuity of local times $L^x$ under $\N_{x_0}$ is given in Theorem 1.2 of [Hong18]. We first give a version of Theorem A under the canonical measure.
\begin{theorem}\label{t4}
If $d=1$ then $\N_0$-a.e. there are random variables $\lL<0<\rR$ such that 
\[ \{x: L^x>0\}= (\lL,\rR).\]
\end{theorem}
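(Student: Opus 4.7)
The plan is to deduce Theorem~\ref{t4} from Theorem~A via the Poisson cluster decomposition \eqref{e1.21}. Under $\P_{\delta_0}$ the clusters form a Poisson point process $\Xi=\sum_i\delta_{\nu^i}$ with intensity $\N_0$, and $L^x=\sum_i L^x(\nu^i)$. By Theorem~A, $\P_{\delta_0}$-a.s.\ $\{x:L^x>0\}=(\lL,\rR)$ is an interval with $\lL<0<\rR$. Since $\N_0$ is $\sigma$-finite with $\N_0(\{\zeta>\eps\})<\infty$ and $\{\nu\not\equiv 0\}=\bigcup_{\eps\in\Q_{>0}}\{\zeta>\eps\}$ up to $\N_0$-null sets, it suffices to prove the interval structure $\N_0$-a.e.\ on $A_\eps:=\{\zeta>\eps\}$ for each fixed $\eps>0$.

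Fix such an $\eps$ and work on the positive-probability $\P_{\delta_0}$-event that $\Xi$ has exactly one atom $\nu^*$ in $A_\eps$. The Palm formula for Poisson point processes identifies the conditional law of $\nu^*$ with $\N_0(\cdot\cap A_\eps)/\N_0(A_\eps)$, independently of the remaining short-lived clusters, so any almost-sure structural statement about $\nu^*$ translates directly into an $\N_0$-a.e.\ statement on $A_\eps$. Decompose $L^x=L^x(\nu^*)+L^x_{\mathrm{small}}$ with $L^x_{\mathrm{small}}=\sum_{\nu^i\in A_\eps^c}L^x(\nu^i)$. A standard modulus-of-continuity bound for the historical super-Brownian motion (equivalently, for the Brownian snake) produces a random $\rho_\eps$, with $\rho_\eps\downarrow 0$ as $\eps\downarrow 0$ $\P_{\delta_0}$-a.s., so that every cluster with lifetime less than $\eps$ is supported in $[-\rho_\eps,\rho_\eps]$; hence $L^x_{\mathrm{small}}$ vanishes off this interval.

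On the $\P_{\delta_0}$-event $\{\rho_\eps<\min(-\lL,\rR)\}$ (of probability tending to $1$ as $\eps\downarrow 0$), Theorem~A then gives $\{x:L^x(\nu^*)>0\}\setminus[-\rho_\eps,\rho_\eps]=(\lL,-\rho_\eps)\cup(\rho_\eps,\rR)$, a disjoint union of two intervals abutting $\pm\rho_\eps$. To merge these into a single interval containing $0$, I would establish that $L^0(\nu^*)>0$ almost surely and then invoke the continuity of $x\mapsto L^x$ under $\N_0$ from Theorem~1.2 of [Hong18]: continuity produces an open neighborhood of $0$ on which $L^x(\nu^*)>0$, bridging the two components. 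Pushing this forward through the Palm identification and letting $\eps$ range over $\Q_{>0}$ yields the desired $\N_0$-a.e.\ interval structure $(\lL(\nu),\rR(\nu))\ni 0$.

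The main obstacle is the positivity claim $L^0(\nu^*)>0$ for $\N_0$-a.e.\ cluster with $\zeta>\eps$. The mean computation $\E_{\N_0}[L^0_t]=\int_0^t p_s(0)\,ds=\sqrt{2t/\pi}$ only yields positivity in expectation, and one must rule out individual clusters placing zero local-time mass at the birthing point while still surviving past time $\eps$. A natural route is to apply the Markov property of $\N_0$ at some small $s\in(0,\eps)$ and combine this with the explicit density representation of one-dimensional super-Brownian motion near its starting point to force $L^0_s(\nu^*)>0$; alternatively one may appeal to the Brownian snake picture, under which the ancestral spine emanating from $0$ automatically contributes positive local time at $0$.
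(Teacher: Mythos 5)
Your overall plan---reduce to Theorem A via the Poisson cluster decomposition, isolate a unique ``large'' cluster, and separately establish $L^0>0$ under $\N_0$ plus continuity to produce $\lL<0<\rR$---is the same strategy the paper uses. But there is a genuine gap at exactly the point you flag, and a secondary complication caused by your choice of decomposition.

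The positivity $L^0>0$ $\N_0$-a.e.\ is the real content of the theorem, and neither of your proposed routes is adequate as stated. The ``Markov property at small $s$ plus density representation'' idea leaves open precisely the question being asked: it is not automatic that the density of $X_s$ is positive at $0$ for a single canonical cluster, and in any case one must then integrate that in $s$ to get $L^0>0$, which is circular. The snake-spine heuristic is also insufficient: the spine itself is a single Brownian path, whose occupation measure is Lebesgue-null, so it contributes nothing to $L^0$. What makes $L^0$ positive is the continuum of sub-excursions grafted onto the spine, and showing that they almost surely deposit positive mass at $0$ requires a quantitative argument. The paper carries this out with the Palm measure formula for the occupation measure $\cZ$ (Proposition 16.2.1 of [Leg99b]): one computes $\N_0\bigl(\cZ(1)\,1(L^0=0)\bigr)$ as a limit of Laplace functionals, which reduces to the integral $\exp\bigl(-\int_0^a 6|w(t)|^{-2}\,dt\bigr)$ over Brownian paths $w$, and this vanishes because $\int_0^a |w(t)|^{-2}\,dt=\infty$ a.s.\ by L\'evy's modulus of continuity. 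That computation is what is missing from your sketch.

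Separately, your decomposition via $A_\eps=\{\zeta>\eps\}$ forces you to introduce the auxiliary $\rho_\eps$ and a modulus-of-continuity claim about small clusters, which is itself unproved, and your ``bridging'' step is incomplete: continuity plus $L^0(\nu^*)>0$ only give an interval $(-a,a)$, with $a$ depending on $\nu^*$, on which $L^x(\nu^*)>0$, but you still need to rule out zeros of $L^{\cdot}(\nu^*)$ on $(a,\rho_\eps)\cup(-\rho_\eps,-a)$, and for fixed $\eps$ there is no reason $\rho_\eps<a$. The paper avoids all of this by decomposing on $\{\rR>\eps\}$ instead of $\{\zeta>\eps\}$: for $x>\eps$, only clusters with $\rR>\eps$ contribute to $L^x$, so on $\{N_\eps=1\}$ one has $L^x(\nu^*)=L^x$ exactly for all $x>\eps$, and Theorem A immediately gives $\N_0(\exists\,\eps<x<\rR,\ L^x=0 \mid \rR>\eps)=0$; letting $\eps\downarrow 0$ and combining with $\rR>0$ $\N_0$-a.e.\ finishes. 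I would recommend adopting that cleaner decomposition, and you must in any case supply a proof of $L^0>0$.
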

 \begin{theorem}\label{t5}
Theorem \ref{t1}, Theorem \ref{t2} and Corollary \ref{t3} hold if  $\P_{\delta_0}$ is replaced with $\N_0$.
\end{theorem}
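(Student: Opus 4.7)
The plan is to transfer the three assertions from $\P_{\delta_0}$ to $\N_0$ by exploiting the Poisson cluster decomposition recorded in \eqref{e1.21}. Let $\Xi=\sum_{i\in I}\delta_{\nu^i}$ be a Poisson point process on $C([0,\infty),M_F)$ with intensity $\N_0$, coupled to $X$ via \eqref{e1.21}, and write $\rR(\nu)$ for the right endpoint of $\{x:L^x(\nu)>0\}$, which is well-defined $\N_0$-a.e.\ by Theorem~\ref{t4}. The Poisson exponential formula gives $\P_{\delta_0}(\rR_X\leq\epsilon)=\exp(-\N_0(\rR>\epsilon))$, and since $\P_{\delta_0}(\rR_X\leq\epsilon)>0$ for every $\epsilon>0$, we deduce $\N_0(\rR>\epsilon)<\infty$.

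Fix $\epsilon>0$ and set $\mu_\epsilon:=\N_0|_{\{\rR>\epsilon\}}$ with total mass $M_\epsilon:=\mu_\epsilon(1)<\infty$. Restricting $\Xi$ to $\{\rR>\epsilon\}$ yields a finite Poisson process whose cardinality $N$ is Poisson$(M_\epsilon)$. On the event $\{\rR_X>\epsilon\}=\{N\geq 1\}$, let $\nu^{i^*}$ denote the contributing cluster that maximizes $\rR$; atom-freeness of $\rR$ under $\N_0$ (which one expects from the Brownian scaling of super-Brownian motion) makes $\nu^{i^*}$ a.s.\ unique, and otherwise a measurable tie-breaking rule suffices. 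The crux is the following local identification: for $x$ in the random interval $(\max_{j\neq i^*}\rR(\nu^j),\rR_X\,]$ every summand in \eqref{e1.21} other than the $i^*$ term vanishes, so $L^x=L^x(\nu^{i^*})$ on a one-sided left neighborhood of $\rR_X=\rR(\nu^{i^*})$. Consequently the H\"older upper bound of Theorem~\ref{t1} and the pointwise lower bound of Theorem~\ref{t2} at $\rR_X$ descend verbatim to $L^x(\nu^{i^*})$ at $\rR(\nu^{i^*})$.

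To transfer the resulting $\P_{\delta_0}$-a.s.\ property of $\nu^{i^*}$ to an $\N_0$-a.e.\ property, the next step is to compute the law of $\nu^{i^*}$. Conditioning on $N=n\geq 1$ and using that the marks are iid with law $\mu_\epsilon/M_\epsilon$, the law of $\nu^{i^*}$ has density $nF_\epsilon(\rR(\nu))^{n-1}$ with respect to $\mu_\epsilon/M_\epsilon$, where $F_\epsilon$ is the distribution function of $\rR$ under $\mu_\epsilon/M_\epsilon$. Averaging over $n$ with the Poisson weights $e^{-M_\epsilon}M_\epsilon^n/n!$ yields, on $\{N\geq 1\}$, a density proportional to $\exp(-M_\epsilon(1-F_\epsilon(\rR(\nu))))$, which is strictly positive $\mu_\epsilon$-a.e. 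Hence the law of $\nu^{i^*}$ on $\{N\geq 1\}$ is equivalent to $\mu_\epsilon=\N_0|_{\{\rR>\epsilon\}}$, so the $\P_{\delta_0}$-a.s.\ boundary property for $\nu^{i^*}$ forces $\N_0(\{\text{boundary property fails at }\rR\}\cap\{\rR>\epsilon\})=0$. Letting $\epsilon\downarrow 0$ through a countable sequence yields the $\N_0$ conclusion at $\rR$, a symmetric argument handles $\lL$, and Corollary~\ref{t3} under $\N_0$ follows from the $\N_0$ version of Theorem~\ref{t2} by the proof already given in the text.

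The only delicate ingredient I anticipate is verifying the atom-freeness $\N_0(\rR=r)=0$ for every $r>0$, needed both to select $\nu^{i^*}$ without ambiguity and to ensure the density above is strictly positive $\mu_\epsilon$-a.e. This is not stated in the excerpt but should follow from scaling or a Palm-type argument; failing that, the tie-breaking workaround still yields the transfer. Everything else is Poisson bookkeeping built on top of Theorems~\ref{t1} and~\ref{t2}, which are assumed at this stage.
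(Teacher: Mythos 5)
Your reduction to the Poisson cluster decomposition is the right idea, but the paper achieves the transfer with less machinery and avoids the obstacle you flagged. The paper conditions on the event $\{N_\varepsilon=1\}$, i.e.\ exactly one cluster with $\rR>\varepsilon$. On that event one automatically has $L^x=L^x(\nu^1)$ for all $x>\varepsilon$ and $\rR_X=\rR(\nu^1)$ (no maximizer needs to be singled out), and the conditional law of that single contributing cluster is exactly $\N_0(\cdot\mid \rR>\varepsilon)$ by elementary Poisson theory. Writing
\[
0=\P_{\delta_0}\bigl(N_\varepsilon=1;\ \cdot\ \bigr)=\P_{\delta_0}(N_\varepsilon=1)\,\N_0\bigl(\ \cdot\ \mid \rR>\varepsilon\bigr),
\]
with $\P_{\delta_0}(N_\varepsilon=1)>0$, immediately annihilates the offending event under $\N_0$ restricted to $\{\rR>\varepsilon\}$, and $\varepsilon\downarrow 0$ finishes. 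Your route instead isolates the $\rR$-maximizing cluster $\nu^{i^*}$, which costs you two extra ingredients: the equivalence of the law of $\nu^{i^*}$ on $\{N\geq 1\}$ with $\mu_\varepsilon$, and the a.s.\ uniqueness of the maximizer, i.e.\ $\N_0(\rR=r)=0$ for each $r>0$. The latter does hold --- since $\{\rR>r\}=\{L^r>0\}$ up to $\N_0$-null sets, atom-freeness follows from the explicit formula $\N_0(L^r>0)=6/r^2$ already invoked in the proof of Theorem~\ref{t4} --- but you do not establish it, and your proposed fallback (``a measurable tie-breaking rule suffices'') is not sound for the lower bound of Theorem~\ref{t2}: if two clusters tie for $\rR_X$, then near $\rR_X$ one only has $L^x=L^x(\nu^{i^*})+L^x(\nu^j)+\cdots$, and a lower bound on this sum does not descend to a lower bound on the single summand $L^x(\nu^{i^*})$. (Ties are harmless for the upper bound of Theorem~\ref{t1}, since each nonnegative summand is dominated by the sum.) Once you supply atom-freeness from the explicit formula, your argument does close; but conditioning on $N_\varepsilon=1$ is strictly simpler and sidesteps both the density computation and the uniqueness question.
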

\no The proofs of these analogous results under $\N_0$ will be given in Section 4.


\section*{Acknowledgements}
This work was done as part of the author's graduate studies at the University of British Columbia. I would like to express many thanks to my supervisor, Professor Edwin Perkins, for suggesting this problem and for showing me the ideas of the proof for the case $d=3$ in private conversations. I also thank anonymous referees for their careful reading of the manuscript and helpful comments.

\section{Upper bound of the local time near the boundary}

Let $g_x(y)=|y-x|$. Then $\frac{d^2}{dy^2} g_x(y)=2\delta_x(y)$ holds in the distributional sense and the martingale problem \eqref{e1.0} suggests the following result.
\begin{proposition}(Tanaka formula for d=1) 
Let $d=1$ and fix $x\neq 0$ in $\R^1$. Then we have $\P_{\delta_0}$-a.s. that
\begin{equation}\label{e2.7}
L_t^x+ |x| = X_t(g_x)-M_t(g_x), \ \forall t\geq 0,
\end{equation}
where $t\mapsto X_t(g_x)$ is continuous for $t\geq 0$ and $(M_t(g_x))_{t\geq 0}$ is a continuous $L^2$ martingale which is the stochastic integral with respect to the martingale measure associated with super-Brownian motion.
\end{proposition}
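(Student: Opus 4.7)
The approach I would take is the classical Tanaka-type mollification: regularize $g_x$ to a smooth function, apply the martingale problem \eqref{e1.0} to the regularization, then pass to the limit. Let $p_\varepsilon$ denote the 1D Brownian transition density and set $g_x^\varepsilon := g_x \ast p_\varepsilon$. Then $g_x^\varepsilon \in C^\infty$ is Lipschitz with constant $1$, satisfies $\tfrac12(g_x^\varepsilon)''(y)=p_\varepsilon(y-x)$, and obeys $\|g_x^\varepsilon - g_x\|_\infty \le C\sqrt{\varepsilon}\to 0$. Because $g_x^\varepsilon$ is unbounded, I cannot plug it directly into \eqref{e1.0}, which is stated for $C_b^2$ test functions; my first step is to apply \eqref{e1.0} to $\chi_R\,g_x^\varepsilon \in C_b^2$, where $\chi_R$ is a smooth cutoff equal to $1$ on $[-R,R]$, and then send $R\to\infty$ using the standard compact support property of one-dimensional super-Brownian motion. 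Up to a random time $T_R\uparrow\infty$ the cutoff is invisible and one obtains
\[
X_t(g_x^\varepsilon) \;=\; X_0(g_x^\varepsilon) \;+\; M_t(g_x^\varepsilon) \;+\; \int_0^t X_s\bigl(p_\varepsilon(\cdot - x)\bigr)\,ds.
\]

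Next I would pass to the limit $\varepsilon\downarrow 0$ term by term. Uniform convergence of $g_x^\varepsilon$ together with the compact support of $X_s$ gives $X_0(g_x^\varepsilon)=g_x^\varepsilon(0)\to|x|$ and $X_t(g_x^\varepsilon)\to X_t(g_x)$. The drift term converges to $L_t^x$ by \eqref{e1.2} combined with the joint continuity of Sugitani's local time in $d=1$, since $p_\varepsilon(\cdot-x)$ is an approximate identity at $x$. For the martingale term, the Burkholder--Davis--Gundy inequality and
\[
\bigl[M(g_x^\varepsilon) - M(g_x^{\varepsilon'})\bigr]_t
\;=\; \int_0^t X_s\bigl((g_x^\varepsilon - g_x^{\varepsilon'})^2\bigr)\,ds
\;\le\; \|g_x^\varepsilon - g_x^{\varepsilon'}\|_\infty^2 \int_0^t X_s(1)\,ds,
\]
together with $\E\!\int_0^t X_s(1)\,ds = t\,X_0(1) < \infty$, show that $\{M_\cdot(g_x^\varepsilon)\}_\varepsilon$ is Cauchy in the space of continuous $L^2$-martingales on $[0,t]$. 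Its limit $M_\cdot(g_x)$ is a continuous $L^2$-martingale with quadratic variation $\int_0^\cdot X_s(g_x^2)\,ds$, identifiable as the stochastic integral of $g_x$ against the SBM martingale measure. Assembling these limits produces \eqref{e2.7}, and the continuity of $t\mapsto X_t(g_x)$ then follows from the identity itself, since both $L_t^x$ and $M_t(g_x)$ are continuous.

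The main technical obstacle is the unboundedness of $g_x$, which affects both the direct applicability of \eqref{e1.0} and the well-definedness of the stochastic integral defining $M_t(g_x)$. Both issues are resolved by the compact support property of 1D SBM, which makes the cutoff $\chi_R$ transparent for $R$ large (depending on $\omega,t$), together with the standard second-moment estimate $\E[X_s(1+y^2)] < \infty$ that guarantees $g_x \in L^2(\P_{\delta_0}\otimes X_s(dy)\,ds)$ and hence lets $M_\cdot(g_x)$ be constructed as an honest stochastic integral. The remaining steps are routine uniform-convergence bookkeeping.
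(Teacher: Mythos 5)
Your proposal is correct and follows essentially the same route as the paper: both mollify $g_x$ by the Brownian semigroup (your $g_x^\varepsilon = g_x * p_\varepsilon$ is exactly the paper's $P_\varepsilon g_x$), both deal with unboundedness by a cutoff that is harmless by the compact-support property of $X$, and both pass to the limit term by term using the uniform bound $\|P_\varepsilon g_x - g_x\|_\infty \le \sqrt{\varepsilon}$, an $L^2$ martingale estimate (you use BDG, the paper uses Doob's $L^2$ inequality — equivalent here), and identification of the limiting drift with $L_t^x$. The only substantive difference is in that last step: you identify $\int_0^t X_s(p_\varepsilon(\cdot-x))\,ds \to L_t^x$ by rewriting it via \eqref{e1.2} as $\int p_\varepsilon(z-x) L_t^z\,dz$ and invoking continuity of $z\mapsto L_t^z$, whereas the paper cites Theorem~6.1 of [BEP91] directly for a.s.\ uniform (in $t\le T$) convergence; either works, though the paper's citation gives uniform convergence for free, which streamlines the upgrade from fixed $t$ to all $t$ simultaneously.
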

\begin{proof}
Let $(P_t)_{t\geq 0}$ be the Markov semigroup of one-dimensional Brownian motion. By cutoff arguments similar to those used in the proof of Proposition 2.4 in [Hong18], we may use the martingale problem \eqref{e1.0}  to see that for any $\eps>0$, with $\P_{\delta_0}$-probability one we have
\begin{equation}\label{e2.7.1}
 X_t(P_\varepsilon g_x)=P_\varepsilon g_x(0)+M_t(P_\varepsilon g_x)+\int_0^t X_s(\frac{\Delta}{2} P_\varepsilon g_x)ds, \ \forall t\geq 0.
\end{equation}
One can check that 
\begin{equation}\label{e0.1.0}
|P_\varepsilon g_x(y)-g_x(y)|\leq \varepsilon^{1/2},\ \forall x, y \in \R,
\end{equation}
and so it follows that
\begin{equation}\label{e0.1.1}
\Big|P_\varepsilon g_x(0)-|x|\Big| \to 0, \text{ as } \varepsilon \downarrow 0.
\end{equation}
Use \eqref{e0.1.0} again to see that for any $T>0$,
\begin{equation}\label{e0.2.2}
\sup_{t\leq T}\Big|X_t(P_\varepsilon g_x)-X_t(g_x)\Big|\leq \eps^{1/2} \sup_{t\leq T} X_t(1)  \to 0, \quad  \P_{\delta_0}-a.s.,
\end{equation}
  and 
\begin{equation}\label{e0.1.3}
 \E_{\delta_0}\Big[\Big(\sup_{t\leq T}\Big|M_t(P_\varepsilon g_x)-M_t(g_x)\Big|\Big)^2\Big] \leq 4 \E_{\delta_0}\Big[\int_0^T X_s\Big((P_\varepsilon g_x-g_x)^2\Big)ds \Big]\to 0.
\end{equation}
 The last inequality follows by Doob's inequality. Now for the convergence of last term on the right-hand side of \eqref{e2.7.1}, we apply integration by parts to get for any $\varepsilon>0$, $\frac{d^2}{dy^2} P_\varepsilon g_x(y)= 2 p_\varepsilon(y-x)=:2p_\varepsilon^x(y).$ Theorem 6.1 in [BEP91] gives us that as $\varepsilon \to 0$, 
 \begin{equation}\label{e0.2.3}
 \sup_{t\leq T} |\int_0^t X_s(p_{\varepsilon}^x) ds-L_t^x|\to 0,\quad \P_{\delta_0}-a.s.,
 \end{equation}
  and hence by taking an appropriate subsequence $\varepsilon_{n} \downarrow 0$, \eqref{e2.7} would follow immediately from \eqref{e2.7.1}, \eqref{e0.1.1}, \eqref{e0.2.2}, \eqref{e0.1.3} and \eqref{e0.2.3}. 
\end{proof}
Now we discuss the differentiability of $L_t^x$ in $d=1$ . We denote, by $D_{x} f(x)$ (resp. $D_{x}^{+}f(x),$ $D_{x}^{-}f(x)$), the derivative (resp. right derivative, left derivative) of $f(x)$. Then we have the following result from Theorem 4 of  [Sug89].
 \begin{thmx}([Sug89])
Let $d=1$ and $X_0=\mu \in M_F(\R)$. Then the following (i) and (ii) hold with $\P_\mu$-probability one.
\begin{enumerate}[(i)]
\item $Z(t,x)=L_t^x-\E_\mu(L_t^x)$ is differentiable with respect to $x$, $\forall t\geq 0$;
\item $D_x Z(t,x)$ is jointly continuous in $t\geq 0$ and $x\in \R$, and we have
\begin{align}
D_x^{+} \E_\mu(L_t^x)-D_x^{-} \E_\mu(L_t^x)=-2\mu(\{x\}), \ t>0, x\in \R.
\end{align}
\end{enumerate}
In particular, if we let $H=\{x\in \R: \mu(\{x\})=0\}$, then $D_x \E_\mu(L_t^x)$ is jointly continuous on $[0,\infty)\times H$ and so with $\P_\mu$-probability one  we have  $L_t^x$ is differentiable with respect to $x$ on $H$ and $D_x L_t^x$ is jointly continuous on $[0,\infty)\times H$.
\end{thmx}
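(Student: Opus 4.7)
The plan is to derive the result as a consequence of a Tanaka-type identity for general starting measure $\mu$. Repeating the mollification argument of the preceding proposition with $X_0 = \mu$ in place of $\delta_0$ yields $\P_\mu$-a.s.
\begin{equation*}
L_t^x + \mu(g_x) = X_t(g_x) - M_t(g_x), \quad \forall t \geq 0.
\end{equation*}
Taking expectations and using the first-moment formula $\E_\mu(X_t(g_x)) = \mu(P_t g_x)$ gives $\E_\mu(L_t^x) = \mu(P_t g_x) - \mu(g_x)$, hence
\begin{equation*}
Z(t,x) = \bigl[X_t(g_x) - \mu(P_t g_x)\bigr] - M_t(g_x).
\end{equation*}

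From here I would extract $D_x Z$ by applying the identity to the smooth mollification $P_\eps g_x$ (whose $x$-derivative is the smooth function $-P_\eps s_x$, where $s_x(y) := \mathrm{sgn}(y-x)$) and then passing $\eps \downarrow 0$. The deterministic piece $\mu(P_t g_x)$ is $C^\infty$ in $x$ for every $t > 0$, with $D_x \mu(P_t g_x) = -\mu(P_t s_x)$. The random pieces converge because in $d = 1$ the measure $X_t$ is absolutely continuous for $t > 0$, so $X_t(\{x\}) = 0$ and $X_t(P_\eps s_x) \to X_t(s_x)$ a.s., while $M_t(P_\eps s_x) \to M_t(s_x)$ in $L^2$ via the Doob-type estimate $\E_\mu\int_0^t X_u((P_\eps s_x - s_x)^2)\,du \to 0$. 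This identifies
\begin{equation*}
D_x Z(t,x) = -X_t(s_x) + \mu(P_t s_x) + M_t(s_x).
\end{equation*}

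Joint continuity of $D_x Z$ on $[0,\infty) \times \R$ is the main technical obstacle. I would invoke Kolmogorov's continuity criterion; the key estimate for the martingale part, by BDG and the quadratic-variation formula for $M$, is
\begin{equation*}
\E_\mu\bigl[\bigl(M_t(s_x) - M_t(s_{x'})\bigr)^p\bigr] \leq C_p\,\E_\mu\Big[\Big(\int_0^t X_u\bigl([x \wedge x', x \vee x']\bigr)\, du\Big)^{p/2}\Big],
\end{equation*}
which one controls by combining higher-moment formulas for super-Brownian motion with the $d = 1$ heat-kernel bound $\E_\mu(X_u([a,b])) \leq C(b-a)u^{-1/2}$; choosing $p$ large enough gives an increment bound $C|x - x'|^\beta$ with $\beta > 1$, and analogous estimates handle the temporal increments and the non-martingale term $X_t(s_x)$. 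Granted this, the jump formula is elementary: since $x \mapsto \mu(P_t g_x)$ is smooth, the jump in $D_x^\pm \E_\mu(L_t^x)$ comes entirely from $-\mu(g_x)$, and the identity $D_x^+ g_x(y) - D_x^- g_x(y) = 2\mathbf{1}_{\{y = x\}}$ yields $D_x^+ \E_\mu(L_t^x) - D_x^- \E_\mu(L_t^x) = -2\mu(\{x\})$. The \emph{in particular} statement follows because on $H$ this jump vanishes, so $D_x \E_\mu(L_t^x)$ is continuous there and $D_x L_t^x = D_x Z(t,x) + D_x \E_\mu(L_t^x)$ is jointly continuous on $[0,\infty) \times H$.
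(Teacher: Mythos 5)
This statement is imported verbatim from Theorem 4 of [Sug89]; the paper supplies no proof of its own and simply cites the reference, so there is no in-house argument to compare your sketch against. What the paper \emph{does} prove nearby (Proposition \ref{p2}) is the Tanaka formula for $D_x L_t^x$ under $\P_{\delta_0}$, and there Theorem B is used as an input — to know that the left-hand difference quotients of $L_t^x$ converge — rather than established.

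As a reconstruction of [Sug89] your outline is conceptually on target: the decomposition $Z(t,x) = \bigl[X_t(g_x) - \mu(P_t g_x)\bigr] - M_t(g_x)$, the formula $D_x Z(t,x) = -X_t(s_x) + \mu(P_t s_x) + M_t(s_x)$, the Kolmogorov estimate for $M_t(s_x)$ built on $(s_x - s_{x'})^2 = 4\,\mathbf{1}_{[x \wedge x',\, x \vee x']}$, and the jump computation via $D_x^+ g_x - D_x^- g_x = 2\,\mathbf{1}_{\{\cdot = x\}}$ are all sound. The one step that would fail as literally written is the separation $\E_\mu(L_t^x) = \mu(P_t g_x) - \mu(g_x)$ and, more seriously, the Tanaka identity $L_t^x + \mu(g_x) = X_t(g_x) - M_t(g_x)$: a finite measure $\mu \in M_F(\R)$ need not have a finite first moment, in which case $\mu(g_x)$, $X_t(g_x)$, and $[M(g_x)]_t = \int_0^t X_s(g_x^2)\,ds$ are all infinite and the identity is vacuous. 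The repair is to never split the bounded combination apart — write $\E_\mu(L_t^x) = \mu(P_t g_x - g_x)$ using $|P_t g_x - g_x| \le \sqrt{t}$, and obtain $D_x Z$ from the uniformly bounded difference quotients $(g_{x+h} - g_x)/h$ rather than from the unbounded object $X_t(g_x)$ — or else first reduce to compactly supported $\mu$ via the branching decomposition and a localization limit. The joint $(t,x)$ Kolmogorov argument and the behaviour at $t = 0$ (where $X_t \to \mu$ only weakly and $s_x$ is discontinuous) are only gestured at, which is acceptable for a sketch but is where the genuine labour of Sugitani's proof lies.
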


So for the case $X_0=\delta_0$, we know from the above theorem that $L_t^x$ is continuously differentiable on $\{x\neq 0\}$. Let $sgn(x)=x/|x|$ for $x\neq 0$ and $sgn(0)=0$. Then $D_y g_x(y)=sgn(y-x)$ for $y\neq x$ and we have the following Tanaka formula for $D_x L_t^x$.
\begin{proposition} \label{p2}
Let $d=1$ and fix $x\neq 0$ in $\R^1$. Then we have $\P_{\delta_0}$-a.s. that
\begin{equation}\label{e2.00}
D_x L_t^x=- sgn(x) + X_t(sgn(x-\cdot))-M_t(sgn(x-\cdot)), \ \forall t\geq 0.
\end{equation}
\end{proposition}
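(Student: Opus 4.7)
The plan is to derive \eqref{e2.00} by differentiating the Tanaka formula \eqref{e2.7} in $x$ via difference quotients. Fix $x\neq 0$ and a sequence $x_n\to x$ with $x_n\neq x$ and $x_n\neq 0$. Writing \eqref{e2.7} at $x$ and at $x_n$, subtracting, and dividing by $x_n-x$, one obtains on a single $\P_{\delta_0}$-full event
\[
\frac{L_t^{x_n}-L_t^x}{x_n-x}+\frac{|x_n|-|x|}{x_n-x}=X_t(h_n)-M_t(h_n),\qquad t\geq 0,
\]
where $h_n(y):=(g_{x_n}(y)-g_x(y))/(x_n-x)$ is uniformly bounded by $1$ and satisfies $h_n(y)\to sgn(x-y)$ for every $y\neq x$. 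The plan is then to send $n\to\infty$ and identify each of the four terms.

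The elementary ratio $(|x_n|-|x|)/(x_n-x)$ tends to $sgn(x)$. For the local-time ratio I would invoke the Sugitani theorem cited just above: since $\delta_0(\{y\})=0$ for every $y\neq 0$, that theorem guarantees that $D_yL_t^y$ is jointly continuous in $(t,y)\in[0,\infty)\times\{y\neq 0\}$. By the mean value theorem $(L_t^{x_n}-L_t^x)/(x_n-x)=D_yL_t^y\big|_{y=\xi_n(t)}$ with $\xi_n(t)$ between $x$ and $x_n$, and this joint continuity (uniform on a compact neighbourhood of $x$ avoiding $0$) forces uniform convergence to $D_xL_t^x$ on $[0,T]$ a.s., for every $T>0$.

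For the martingale term, $M_t(\phi)$ extends to a continuous $L^2$-martingale for any bounded Borel $\phi$ via the stochastic integral against the orthogonal martingale measure, with $[M(\phi)]_t=\int_0^t X_s(\phi^2)\,ds$, so $M_t(sgn(x-\cdot))$ is well defined. Doob's inequality and the $L^2$-isometry give
\[
\E_{\delta_0}\!\Bigl[\sup_{t\leq T}\bigl(M_t(h_n)-M_t(sgn(x-\cdot))\bigr)^2\Bigr]\leq 4\,\E_{\delta_0}\!\Bigl[\int_0^T X_s\bigl((h_n-sgn(x-\cdot))^2\bigr)\,ds\Bigr].
\]
The integrand is bounded by $4$ and vanishes for $y\neq x$; using $\E_{\delta_0}[X_s(\{x\})]=\int_{\{x\}}p_s(y)\,dy=0$, Fubini yields $\E_{\delta_0}[\int_0^T X_s(\{x\})\,ds]=0$, and dominated convergence drives the right-hand side to zero. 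Passing to a subsequence I obtain a.s.\ uniform convergence $M_t(h_n)\to M_t(sgn(x-\cdot))$ on $[0,T]$.

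Along this subsequence,
\[
X_t(h_n)=\frac{L_t^{x_n}-L_t^x}{x_n-x}+\frac{|x_n|-|x|}{x_n-x}+M_t(h_n)
\]
converges uniformly on $[0,T]$ a.s.\ to $D_xL_t^x+sgn(x)+M_t(sgn(x-\cdot))$, while bounded convergence in the finite measure $X_t$ at each fixed $t$ (using $X_t(\{x\})=0$ for $t>0$ a.s., by the absolute continuity of the $d=1$ super-Brownian density) identifies the pointwise limit as $X_t(sgn(x-\cdot))$. Rearranging gives \eqref{e2.00} for all $t\in[0,T]$ on one full event, and letting $T$ run over integers extends this to all $t\geq 0$. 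The main obstacle is the dominated-convergence step in the martingale bound, which rests on $\int_0^T X_s(\{x\})\,ds=0$ a.s.; I would verify this via the first-moment identity $\E_{\delta_0}[X_s(\{x\})]=0$. Everything else is bookkeeping on the single null set on which \eqref{e2.7} or the Sugitani regularity fails.
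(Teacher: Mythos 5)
Your proof is correct and follows essentially the same route as the paper: differentiate the Tanaka formula \eqref{e2.7} by taking difference quotients, use the Sugitani regularity (Theorem B) to identify the limit of the local-time ratio, bounded convergence for the $X_t$ term, and Doob's inequality plus the $L^2$-isometry for the martingale term. The only differences are organizational --- you allow two-sided increments and obtain uniform-in-$t$ convergence directly via the mean value theorem and joint continuity, then pass to a subsequence for the martingale term, whereas the paper fixes $t$, deduces a.s.\ convergence of the martingale quotient from the a.s.\ convergence of all other terms, and then upgrades from rational to all $t$ by continuity --- and both versions work.
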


\begin{proof}
Fix any $x\neq 0$ and any $t\geq 0$. Choose some positive sequence $\{h_n\}_{n\geq 1}$ such that $h_n \downarrow 0$. Then use \eqref{e2.7} to see that with $\P_{\delta_0}$-probability one,
\begin{align}\label{e4}
\frac{1}{h_n}(L_t^{x+h_n}-L_t^x)+\frac{1}{h_n}(|x+h_n|-|x|)=\frac{1}{h_n}(X_t(g_{x+h_n})-X_t(g_{x}))-\frac{1}{h_n}(M_t(g_{x+h_n})-M_t(g_{x})).
\end{align}
By Theorem B, we conclude that the left hand side converges a.s. to $D_x L_t^x+ sgn(x)$ as $h_n \downarrow 0$. For the right hand side, first note that for all $x,y \in \R$, we have $|(|x+h-y|-|x-y|)/h| \leq 1$ for all $h>0$. Then bounded convergence theorem implies as $h_n \downarrow 0$,
\[\frac{1}{h_n}(X_t(g_{x+h_n})-X_t(g_{x_n}))=\int \frac{1}{h_n}(|x+h_n-y|-|x-y|) X_t(dy) \to \int sgn(x-y) X_t(dy),\] and
\begin{align*}
&\E_{\delta_0}\bigg[\Big(\frac{1}{h_n}(M_t(g_{x+h_n})-M_t(g_{x}))-M_t(sgn(x-\cdot))\Big)^2\bigg]\\
\leq& \E_{\delta_0}\bigg[\int_0^t  \int \left(\frac{1}{h_n}(|x+h_n-y|-|x-y|)-sgn(x-y)\right)^2 X_s(dy) ds\bigg] \\
=& \int_0^t ds \int p_s(y)\left(\frac{1}{h_n}(|x+h_n-y|-|x-y|)-sgn(x-y)\right)^2 dy \to 0.
\end{align*}
In the last equality we use $\E_{\delta_0} X_t(dy)=p_t(y) dy$ from Lemma 2.2 of [KS88]. So every term, except the last term on the right-hand side, in \eqref{e4} converges a.s. and hence the last term converges a.s. as well. Note we have shown that it converges in $L^2$ to $M_t(sgn(x-\cdot))$. Then it follows that the last term converges a.s. to $M_t(sgn(x-\cdot))$ and so \eqref{e2.00} for any fix $t\geq 0$ follows from \eqref{e4} .\\

Now take countable union of null sets to see that with $\P_{\delta_0}$-probability one, we have \eqref{e2.00} holds for all rational $t\geq 0$. Note by Theorem B we have $t\mapsto D_x L_t^x$ is continuous for all $t\geq 0$ $\P_{\delta_0}$-a.s.. For the right-hand side terms of \eqref{e2.00}, since $X_t(\{x\})=0$ for all $t\geq 0$ $\P_{\delta_0}$-a.s., the weak continuity of $t\mapsto X_t$ for all $t\geq 0$ would give us the continuity of $t\mapsto X_t(sgn(x-\cdot))$ for all $t\geq 0$. Next since $sgn(x-\cdot)$ is a bounded function and $M_t(sgn(x-\cdot))=\int_0^t \int sgn(x-y) M(dy ds)$ is an integral with respect to the martingale measure, it follows immediately that $t\mapsto M_t(sgn(x-\cdot))$ is continuous for all $t\geq 0$. Therefore we can upgrade the rational $t\geq 0$ to all $t\geq 0$ and the proof is complete.
\end{proof}

Now we will turn to the proof of Theorem \ref{t1}. By symmetry we can consider the case $x>0$. Since $X_t(1)=0$ for $t=\zeta$, $\P_{\delta_0}$-a.s., we use Proposition \ref{p2} with $t=\zeta$ to see that for any $x>0$, with $\P_{\delta_0}$-probability one we have
\[
L'(x):=D_x L^x =-1-\int_0^\infty \int sgn(x-z) M(dz ds).
\]
Define $N_t^{x,y}=\int_0^t \int (sgn(y-z)- sgn(x-z)) M(dz ds)$ for $x,y >0$ and $t\geq 0$. Then we have 
\begin{align}\label{e0.1.2}
L'(x)-L'(y)=N_\infty^{x,y}=\int_0^\infty \int (sgn(y-z)- sgn(x-z)) M(dz ds),
\end{align}
and its quadratic variation is 
\begin{align}\label{e0.0.1}
[N^{x,y}]_\infty=&\int_0^\infty  \int  (sgn(y-z)-sgn(x-z))^2 X_s(dz) ds\nonumber\\
=& \int (sgn(y-z)-sgn(x-z))^2 L^z dz =4\ \left|\int_x^y L^z dz\right|.
\end{align}
The second equality is by \eqref{e1.2} and the last follows since $(sgn(y-z)-sgn(x-z))^2 \equiv 4$ for $z$ between $x$ and $y$, and $\equiv 0$ otherwise.\\

The following theorem, which is a generalization of Theorem 4.1 of [MPS06], carries out the main bootstrap idea we use to prove Theorem \ref{t1}: we start from a lower order of H\"{o}lder continuity, say $\xi_0$, of the local time $L^x$ and then upgrade to a higher order of H\"{o}lder continuity $\xi_1\approx (3+\xi_0)/{2}$. By iterating we can reach the highest possible order $3$.\begin{theorem}\label{l1.4}
Let $Z_N$ be the random set $[\rR- 2^{-N}, \rR] \cap (0,\infty)$ for any positive integer $N\geq 1$, where $\rR$ is the r.v. from Theorem A. Assume $\xi_0 \in (0,3)$ satisfies 
\begin{align}
&\exists 1\leq N_{\xi_0}(\omega)<\infty \ a.s. \text{ such that } \forall N\geq N_{\xi_0}(\omega), x\in Z_N,\nonumber\\
&\forall |y-x|\leq 2^{-N} \Rightarrow |L^x-L^y|\leq 2^{-\xi_0 N}. \label{e0.102}
\end{align}
Then for all $0<\xi_1<(3+\xi_0)/{2}$,
\begin{align}
&\exists 1\leq N_{\xi_1}(\omega)<\infty  \ a.s. \text{ such that } \forall N\geq N_{\xi_1}(\omega), x\in Z_N,\nonumber\\
&\forall |y-x|\leq 2^{-N} \Rightarrow |L^x-L^y|\leq 2^{-\xi_1 N}. \label{e0.103}
\end{align}
\end{theorem}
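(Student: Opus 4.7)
The plan is to convert the assumed $\xi_0$-Hölder bound on $L$ into a quadratic-variation bound for the martingale $N^{x,y^*}_\infty$ from \eqref{e0.1.2}, pay a square-root price via a sub-Gaussian tail for the derivative $L'$, and then integrate back. The identity $L^x-L^y=\int_y^x L'(z)\,dz$ at once yields $|L^x-L^y|\leq|x-y|\sup_{z}|L'(z)|$, so everything reduces to controlling $\sup_{z\in I_N}|L'(z)|$ on the enlarged neighbourhood $I_N:=[\rR-2^{1-N},\rR]$. By Theorem A, $L$ vanishes identically on $(\rR,\infty)$, and since $L$ is continuously differentiable there (Theorem B), $L'(y^*)=0$ for every $y^*>\rR$; \eqref{e0.1.2} then gives
\[
L'(x)=N^{x,y^*}_\infty,\qquad [N^{x,y^*}]_\infty=4\int_x^{\rR} L^z\,dz.
\]

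The hypothesis \eqref{e0.102} applied at scale $N-1$, combined with $L^{\rR}=0$, forces $L^z\leq C\,2^{-\xi_0 N}$ uniformly over $z\in I_N$, so $[N^{x,y^*}]_\infty\leq C'\,2^{-(1+\xi_0)N}$ on $\{N\geq N_{\xi_0}\}$. After stopping at the first time the quadratic variation hits the level $C' 2^{-(1+\xi_0)N}$, the standard sub-Gaussian tail for continuous martingales gives, for any $\eta<(1+\xi_0)/2$ and each fixed $x$,
\[
\P\bigl(|L'(x)|\geq 2^{-\eta N},\ N\geq N_{\xi_0}\bigr)\leq 2\exp\bigl(-c\,2^{(1+\xi_0-2\eta)N}\bigr).
\]

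To upgrade to a uniform supremum I would chain dyadically over a deterministic grid of $O(2^{\delta N})$ points at spacing $2^{-(1+\delta)N}$ for an arbitrarily small $\delta>0$, placed in a fixed bounded region that contains $I_N$ on the a.s.\ event $\{\rR\leq K\}$ for large $K$. A union bound handles the grid, and the oscillation on each subinterval of length $2^{-(1+\delta)N}$ is controlled either by iterating the same sub-Gaussian estimate on increments $N^{x,x_k}_\infty$ (whose quadratic variation is at most $4\cdot 2^{-(1+\delta+\xi_0)N}$) down to arbitrarily fine scales, or equivalently by a Kolmogorov continuity argument for $L'$ based on the moment bound $\E[|L'(x)-L'(y)|^{2p}\,1_{\{N\geq N_{\xi_0}\}}]\leq C_p(|y-x|\,2^{-\xi_0 N})^p$ (itself a consequence of BDG and the same quadratic-variation control, via a stopping argument). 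The doubly exponential decay of each pointwise tail dominates the polynomial-in-$2^N$ grid count, so Borel–Cantelli produces $\sup_{z\in I_N}|L'(z)|\leq 2^{-\eta N}$ for all $N\geq N_{\xi_1}(\omega)$. Integration then gives $|L^x-L^y|\leq 2^{-N}\cdot 2^{-\eta N}=2^{-(1+\eta)N}$, and choosing $\eta$ close enough to $(1+\xi_0)/2$ so that $1+\eta=\xi_1$ captures any $\xi_1<(3+\xi_0)/2$.

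The hard part is precisely this chaining/interpolation step: one must turn the pointwise exponential tail into a genuine supremum without spending the slack $(3+\xi_0)/2-\xi_1$ on logarithmic or polynomial overhead. What saves us is that the sub-Gaussian tail is doubly exponential in $N$, so any polynomial-in-$2^N$ chaining cost is absorbed harmlessly; a merely polynomial BDG moment bound would plausibly lose a log factor per bootstrap stage and stall strictly below the target exponent $3$.
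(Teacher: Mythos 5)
Your proposal is correct and follows the paper's strategy essentially step for step: the assumed $\xi_0$-regularity controls $[N^{x,y}]_\infty=4|\int_x^y L^z\,dz|\lesssim 2^{-\xi_0 N}|y-x|$ on the enlarged interval near $\rR$; a sub-Gaussian tail for the continuous martingale (Dubins--Schwarz / exponential inequality, with a stopping-time localisation exactly as you indicate) controls $L'$; dyadic chaining plus Borel--Cantelli upgrades the pointwise tail to $\sup_{I_N}|L'|\lesssim 2^{-\eta N}$; and a single trivial integration gains the extra order $1$, giving any $\xi_1<1+(1+\xi_0)/2=(3+\xi_0)/2$. Your $\eta<(1+\xi_0)/2$ is the paper's $\eta(1+\xi_0)$ with the paper's $\eta\in(1/4,1/2)$, and your use of a fixed far reference $y^*=K$ with $L'(K)=0$ on $\{\rR<K\}$ plays the role of the paper's appeal to $L'(\rR)=0$.

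The one place your route genuinely differs is the chaining mesh. The paper anchors its dyadic grid at the random endpoint $\rR$ (the points $\rR-i2^{-k}$ and the approximants $x_k=\rR-\lfloor 2^k(\rR-x)\rfloor 2^{-k}$) and then feeds these $\cF_\infty$-measurable points into a tail estimate that the exponential martingale inequality supplies only for fixed $(x,y)$; you instead lay down a deterministic grid in a fixed box covering $I_N$ on $\{\rR\le K\}$ and confine all randomness to the event $\{x_j\in I_N,\ N\ge N_{\xi_0}\}$ on which the quadratic-variation bound holds, so every martingale inequality is applied only at deterministic indices. That is the cleaner way to run the same chaining argument, and it is the part of your write-up that actually deserves the label ``the hard part.'' One small correction to your closing remark: a polynomial BDG/Kolmogorov route would only cost an $\varepsilon_n$ per bootstrap stage, and since the paper's own iteration $\xi_{n+1}=\tfrac12(3+\xi_n)\bigl(1-\tfrac{1}{n+3}\bigr)$ also surrenders a vanishing amount at each step yet still converges to $3$, the moment-bound route would not in fact stall short of $3$; the doubly exponential tail is a convenience, not a necessity.
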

\begin{proof}
Note that $\rR\in Z_N$ for all $N\geq 1$. By \eqref{e0.102}, we have
\begin{align}\label{e0.101}
|L^z|=|L^z-L^{\rR}|\leq 2^{-\xi_0 (N-1)}, \text{ if } z \in Z_{N-1},\ N\geq N_{\xi_0}+1. 
\end{align}
Let $N\geq N_{\xi_0}+1$. For $x \in Z_N$ and $|y-x|\leq 2^{-N}$, we have $y\in Z_{N-1}$ and $z\in Z_{N-1}$ for any $z$ between $x$ and $y$. Therefore \eqref{e0.0.1} implies
\begin{align}\label{e1}
[N^{x,y}]_\infty= 4\ |\int_x^y L^z dz| \leq 4\cdot 2^{- \xi_0 (N-1)} |y-x| \leq 2^5 \cdot 2^{-\xi_0 N} |y-x|, 
\end{align}
the first inequality by \eqref{e0.101} with $z\in Z_{N-1}$.\\

Pick $1/4<\eta<1/2$ such that 
\begin{align}\label{ea.1}
\eta(1+\xi_0)+1>\xi_1.
\end{align}
By using the Dubins-Schwarz theorem (see [RY94], Theorem V1.6 and V1.7), with an enlargement of the underlying probability space, we can construct some Brownian motion $(B(t), t\geq 0)$ in $\R$ such that $L'(x)-L'(y)=N_\infty^{x,y}=B([N^{x,y}]_\infty)$. So for any $N\in \N$, we have
\begin{align}\label{e0}
&\P_{\delta_0}(|L'(x)-L'(y)|\geq 2^{5} \cdot 2^{-\eta \xi_0 N} |y-x|^\eta, \ x\in Z_N, \ |y-x|\leq 2^{-N}, \ N\geq N_{\xi_0}+1) \nonumber\\
\leq& P(\sup_{s\leq 2^5 \cdot 2^{-\xi_0 N} |y-x|}   |B(s)| \geq 2^5 \cdot 2^{-\eta \xi_0 N} |y-x|^\eta )\nonumber\  \text{ (by \eqref{e1}) }\\
\leq & 2 \exp(-2^5 \cdot 2^{\xi_0 N(1-2\eta)} |y-x|^{2\eta-1} ).
\end{align}
For $k\geq N$, define \[M_{k,N}=\max\Big\{\ |L'(\rR-\frac{i+1}{2^k})-L'(\rR-\frac{i}{2^k})|: \ 0\leq i\leq 2^{k-N}\Big\},\] and 
\[A_N=\Big\{\omega: \exists\ k\geq N\ s.t.\ M_{k,N} \geq 2^5 \cdot 2^{-\eta \xi_0 N} 2^{-\eta k}, N\geq N_{\xi_0}+1  \Big\}.\]  Note for each $0\leq i\leq 2^{k-N}$, we have $\rR-i 2^{-k} \in Z_N$. Let $x=\rR-i 2^{-k}$ and $y=\rR-(i+1) 2^{-k}$ in \eqref{e0} to get
\begin{align}
&\P_{\delta_0}(|L'(\rR-\frac{i}{2^k})-L'(\rR-\frac{i+1}{2^k})|\geq 2^{5} \cdot 2^{-\eta \xi_0 N} 2^{-\eta k}, k\geq N\geq N_{\xi_0}+1) \\
 \leq& 2 \exp(-2^5 \cdot 2^{\xi_0 N(1-2\eta)} 2^{k(1-2\eta)} ), \nonumber
\end{align}
and hence
\[\P_{\delta_0}\Big(\bigcup_{N'=N}^\infty A_{N'}\Big)\leq \sum_{N'=N}^\infty \sum_{k=N'}^\infty (2^{k-N'}+1) \cdot 2 \exp(-2^5 \cdot 2^{\xi_0 N'(1-2\eta)} 2^{k(1-2\eta)} ) \leq c_0 \exp(-c_1 2^{ N(1+\xi_0)(1-2\eta)})\] for some constants $c_0,c_1>0$.
Let \[N_1=\min\{N \in \N: \omega \in \bigcap_{N'=N}^\infty A_{N'}^c\}.\] The above implies 
\[\P_{\delta_0}(N_1>N)=\P_{\delta_0}\Big(\bigcup_{N'=N}^\infty A_{N'}\Big) \leq c_0 \exp(-c_1 2^{ N(1+\xi_0)(1-2\eta)}),\]
and so $N_1$ is an a.s. finite random variable. Define 
\begin{align}\label{e0def}
N_{\xi_1}= N_1 \vee (N_{\xi_0}+1) \vee \frac{12}{\eta(1+\xi_0)+1-\xi_1}\vee 1,
\end{align}  where the third one is well defined by \eqref{ea.1}.
For all $N\geq N_{\xi_1}$, $k\geq N$, $x\in Z_N$ and $|y-x|\leq 2^{-N}$, let $x_k=\rR-\lfloor 2^k (\rR-x) \rfloor 2^{-k} \downarrow x$ and $y_k=\rR-\lfloor 2^k (\rR-y) \rfloor 2^{-k} \downarrow y$. Then $|x_k-x_{k+1}|\leq 2^{-(k+1)}$ and $|y_k-y_{k+1}|\leq 2^{-(k+1)}$. Note $x_N, y_N\in \{\rR,\rR-2^{-N}, \rR-2^{1-N}\}$ and $|x_N-y_N|\leq 2^{-N}$ since $|y-x|\leq 2^{-N}$. The continuity of $L'(x)$ gives
\[L'(x)=-L'(x_N)+\sum_{k=N}^\infty \big(L'(x_k)-L'(x_{k+1})\big),\]
and
\[L'(y)=-L'(y_N)+\sum_{k=N}^\infty \big(L'(y_k)-L'(y_{k+1})\big).\]
So
\begin{align}
&|L'(x)-L'(y)| \\
\leq& |L'(x_N)-L'(y_N)|+\sum_{k=N+1}^\infty \Big( |L'(x_k)-L'(x_{k+1})|+|L'(y_k)-L'(y_{k+1})| \Big)\nonumber\\
\leq& M_{N,N}+\sum_{k=N}^\infty 2 M_{k+1,N}\leq 2^5 \cdot 2^{-\eta \xi_0 N} 2^{-\eta N}+2\sum_{k=N}^\infty 2^5 \cdot 2^{-\eta \xi_0 N} 2^{-\eta (k+1)}\nonumber \\
\leq& 2^{10} \cdot 2^{-\eta N (\xi_0+1)},\nonumber
\end{align}
where we have used the definitions of $M_{k,N}$ and $A_{N}$ and $N\ge N_{\xi_1}\geq N_1 \vee (N_{\xi_0}+1)$ by \eqref{e0def} in the third line.
Let $x=z \in Z_N$ and $y=\rR$ in above. Then use $L'(\rR)=0$ to see that 
\begin{align}\label{e3}
|L'(z)|\leq 2^{10} \cdot 2^{-N\eta (1+\xi_0)},\ \forall z\in Z_N, N\geq N_{\xi_1}.
\end{align}
Let $N\geq N_{\xi_1}+1$. For $x \in Z_N$ and $|y-x|\leq 2^{-N}$, we have $y\in Z_{N-1}$ and $z\in Z_{N-1}$ for any $z$ between $x$ and $y$. Use \eqref{e3} to get
\[|L(y)-L(x)|=|L'(z)||y-x|\leq 2^{10} \cdot 2^{-(N-1) \eta (1+\xi_0)} 2^{-N} \leq 2^{-\xi_1 N},\] the last by $N> N_{\xi_1}>12/(\eta(1+\xi_0)+1-\xi_1)$ and \eqref{ea.1}.
\end{proof}
Theorem \ref{t1} follows from the following corollary of the above result.
\begin{corollary}\label{c2.4}
Let $\gamma \in (0,3)$. Then $\P_{\delta_0}$-a.s. there is a random variable $\delta(\gamma, \omega)>0$ such that for any $0<\rR-x<\delta$, we have $L^x \leq 2^\gamma (\rR-x)^\gamma$.
\end{corollary}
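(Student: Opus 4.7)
The plan is to iterate Theorem~\ref{l1.4} to produce a H\"older exponent strictly larger than $\gamma$, and then specialize its conclusion by taking $y=\rR$. To seed the bootstrap I need an initial exponent $\xi_0\in(0,3)$ satisfying \eqref{e0.102}. Theorem~2.2 of [MP17] gives $\P_{\delta_0}$-a.s.\ that $L^x$ is $C^{3/2-\eta}$-H\"older continuous on compact subsets of $(0,\infty)$ with some random constant $C(\omega)$. Since $\rR>0$ a.s., for a.e.\ $\omega$ there is an $N_0(\omega)$ such that $Z_N\subset[\rR/2,\rR]$ for all $N\ge N_0$, and on this interval the [MP17] bound yields $|L^x-L^y|\le C(\omega)|x-y|^{\xi}$ with, say, $\xi=5/4$. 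Passing to a slightly smaller exponent $\xi_0<\xi$ absorbs $C(\omega)$ at dyadic scales, since $C(\omega)\cdot 2^{-\xi N}\le 2^{-\xi_0 N}$ for all $N$ beyond a random threshold. This verifies \eqref{e0.102} for some $\xi_0\in(1,3/2)$.

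Next I iterate: define $\xi_{n+1} := (3+\xi_n)/2 - (3-\xi_n)/4$, so $3-\xi_{n+1}=\tfrac{3}{4}(3-\xi_n)$ and $\xi_n\uparrow 3$. For the given $\gamma\in(0,3)$, pick $n$ with $\xi_n>\gamma$ and apply Theorem~\ref{l1.4} $n$ times, obtaining an a.s.\ finite $N_\gamma(\omega)\ge 1$ such that for every $N\ge N_\gamma$, $x\in Z_N$, and $|y-x|\le 2^{-N}$,
\begin{equation*}
|L^x-L^y|\le 2^{-\gamma N}.
\end{equation*}

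Finally, I take $y=\rR$. Since $\{x:L^x>0\}=(\lL,\rR)$ by Theorem~A and $L^\cdot$ is globally continuous in $d=1$ [Sug89], we have $L^\rR=0$. Set $\delta:=2^{-N_\gamma}$. For $0<\rR-x<\delta$, pick the unique $N\ge N_\gamma$ with $2^{-(N+1)}<\rR-x\le 2^{-N}$; then $x\in Z_N$ and $|x-\rR|\le 2^{-N}$, so
\begin{equation*}
L^x=|L^x-L^\rR|\le 2^{-\gamma N}\le \bigl(2(\rR-x)\bigr)^\gamma = 2^\gamma (\rR-x)^\gamma,
\end{equation*}
as required. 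The only delicate step is the first one: verifying \eqref{e0.102} on the \emph{random} set $Z_N$, because the [MP17] H\"older constant is random and its bound only holds away from $0$. Once one localizes using $\rR>0$ and absorbs the constant into a tiny reduction of the exponent, the rest reduces to a finite iteration of Theorem~\ref{l1.4} and a scale-matching argument with $y=\rR$.
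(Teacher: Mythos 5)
Your overall strategy—iterate Theorem~\ref{l1.4} to boost the H\"older exponent above $\gamma$, then specialize to $y=\rR$ and match dyadic scales—is exactly the paper's argument, and your scale-matching and use of $L^\rR=0$ are correct. The iteration scheme $\xi_{n+1}=(3+\xi_n)/2-(3-\xi_n)/4$ is a valid variant of the paper's $\xi_{n+1}=\frac{1}{2}(3+\xi_n)(1-\frac{1}{n+3})$ and also satisfies the requirement $\xi_{n+1}<(3+\xi_n)/2$ with $\xi_n\uparrow 3$.

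However, there is a genuine error in the seeding step. You claim that Theorem~2.2 of [MP17] gives $|L^x-L^y|\le C(\omega)|x-y|^{5/4}$ for $x,y$ in compact subsets of $(0,\infty)$. This cannot be true: a function that is $\xi$-H\"older continuous in the ordinary (zeroth-order) sense with exponent $\xi>1$ on an interval is necessarily constant there (split $[x,y]$ into $n$ pieces and let $n\to\infty$), whereas $L$ is strictly positive on $(\rR/2,\rR)$ and vanishes at $\rR$. The notation ``$C^{(4-d)/2-\eta}$'' with exponent larger than $1$ refers to the H\"older scale where $C^{1+\alpha}$ means $C^1$ with $\alpha$-H\"older derivative; for $d=1$ what you actually get from [MP17]/[Sug89] is that $L$ is $C^1$ away from $0$ with $(1/2-\eta)$-H\"older derivative, not that $L$ itself is $(3/2-\eta)$-H\"older. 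The paper's proof accordingly restricts the seed to $\xi_0\in(0,1)$ (it takes $\xi_0=1/2$), which is the right range for a pure zeroth-order H\"older bound on a non-constant function. The error is easy to repair: replace your $\xi=5/4$ by any $\xi_0\in(0,1)$ (or derive a near-$3/2$ one-sided bound at $\rR$ from $L'(\rR)=0$ and the H\"older continuity of $L'$, but that requires an argument you did not give); the iteration then reaches any target $\gamma<3$ in finitely many steps. As written, though, the seeding claim is false and the proposal has a gap.
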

\begin{proof}
By Theorem 2.2 in [MP17], for any $0<\xi_0<1$, with $\P_{\delta_0}$-probability one, there is some $0<\rho(\omega)\leq 1$ such that 
\begin{equation}\label{e0.0.3}
|L^y-L^x|<|y-x|^{\xi_0}, \ \text{ for } x,y>0 \text{ with }|y-x|<\rho.
\end{equation}
Note we may set $\varepsilon_0=0$ in Theorem 2.2 of [MP17] due to the global continuity of $L^x$ in $d=1$. Pick $\xi_0=1/2$, then \eqref{e0.102} in Theorem \ref{l1.4} holds for $N\geq N_{\xi_0}(\omega)=1 \vee \log_2 (\rho(\omega)^{-1} ) $. Inductively, define $\xi_{n+1}=\frac{1}{2}(3+\xi_n)(1-\frac{1}{n+3})$ so that $\xi_{n+1} \uparrow 3$. Pick $n_0$ such that $\xi_{n_0}\geq \gamma > \xi_{n_0-1}$. Apply Theorem \ref{l1.4} inductively $n_0$ times to get \eqref{e0.102} for $\xi_0=\xi_{n_0-1}$ and hence, \eqref{e0.103} with $\xi_1=\xi_{n_0}$.\\

Consider $0<\rR-x \leq 2^{-N_{\xi_{n_0}}}$. Choose $N\geq N_{\xi_{n_0}}$ such that $2^{-(N+1)}<\rR-x \leq 2^{-N}.$ Then $x\in Z_N$ and \eqref{e0.103} with $\xi_1=\xi_{n_0}$ implies
\begin{align}\label{e0.2}
|L^x|=|L^x-L^\rR|\leq 2^{-N\xi_{n_0}} \leq  2^{-N\gamma} \leq \left(2(\rR-x)\right)^\gamma= 2^\gamma (\rR-x)^\gamma.
\end{align}
The proof is completed by choosing $\delta=2^{-N_{\xi_{n_0}}}>0$.
\end{proof}

\section{Lower bound of the local time near the boundary}

\begin{proof}[Proof of Theorem \ref{t2}]
The proof of the lower bound on the local time near the boundary requires an application of Dynkin's exit measures of super-Brownian motion $X$. The exit measure of $X$ from an open set $G$ under $\P_{X_0}$ is denoted by $X_G$ (see Chp. V of [Leg99a] for the construction of the exit measure). Intuitively $X_{G}$ is a random finite measure supported on $\partial G$,  which corresponds to the mass started at $X_0$ which is stopped at the instant it leaves $G$. 
The Laplace functional of $X_G$ is given by 
\begin{align}\label{LFEM}
\E_{X_0}(\exp(-X_G(g))=\exp\Bigl(-\int U^g(x)X_0(dx)\Bigr),
\end{align}
where $g:\partial G\to[0,\infty)$ is continuous and $U^g\ge 0$ is the unique continuous function on $\overline G$ which is $C^2$ on $G$ and solves
\begin{equation}\label{EMpde}
\Delta U^g=(U^g)^2\text{ on }G,\quad U^g=g\text{ on }\partial G.
\end{equation}
Now we work with a one-dimensional super-Brownian motion $X$ with initial condition
$y_0 \delta_0$. For $r > 0$ we let $Y_r \delta_r$ denote the exit measure $X_{(-\infty, r)}$ from $(-\infty, r)$
and set $Y_0=y_0$. Then Proposition 4.1 of [MP17] implies under $\P_{y_0\delta_0}$ there is a cadlag version of $Y$ which is a stable continuous state branching process (SCSBP) starting at $y_0$ with parameter $3/2$, and so is an $(\cF_r^Y)_{r\geq 0}$-martingale with $\cF_r^Y=\sigma(Y_s, s\leq r)$ (see Section II.1 of [Leg99a] for the definition of (SCSBP)). In particular (4.6) in [MP17] gives
\[\E_{y_0 \delta_0}(\exp(-\lambda Y_r))=\exp(-6 y_0 (r+\sqrt{6/\lambda})^{-2}),\quad \forall \lambda\geq 0, r\geq 0.\]
Let $\lambda \uparrow \infty$, we have 
\begin{align}\label{e2.01}
\P_{y_0 \delta_0}(Y_r=0)=\exp(-6y_0 r^{-2}), \quad \forall r\geq 0.
\end{align}

Let $R_n= \inf\{r\geq 0: Y_r\leq 2^{-n}\} \uparrow \rR= \inf\{r\geq 0: Y_r=0\}$ as $n\to \infty$. Note the $\rR$ defined here will give the same $\rR$ in Theorem A. By repeating the arguments in the proof of Theorem 1.7 in [MP17], for any $\beta>3/2$, we have 
\begin{align}\label{e2.0}
\text{w.p.1} \ \exists N_0(\omega)<\infty, \text{ so that } \inf_{0<x<R_n} L^x >2^{-n\beta}, \ \forall n>N_0.
\end{align}
Note again we may set $\varepsilon_0=0$ in Theorem 2.2 of [MP17] due to the global continuity of $L^x$ in $d=1$ to get the above. The definition of $R_n$ implies $Y(R_n)=2^{-n}, \P_{\delta_0}$-a.s. as $Y$ is a SCSBP and hence it only has positive jumps, i.e. it is spectrally positive (see [CLB09]). So for any $0<\xi<1/2$, recalling that the non-negative martingale $Y$ stops at $0$ when it hits $0$ at time $\rR$, we see that
\begin{align*}
\P_{\delta_0} (|R_n-\rR|>(2^{-n})^\xi)= &\P_{\delta_0} (\rR>R_n+(2^{-n})^\xi)\leq \P_{\delta_0} (Y_{{R_n}+2^{-n\xi}}>0)\\
=&\E_{\delta_0} (\P_{\delta_0}(Y_{{R_n}+2^{-n\xi}}>0|\cF_{R_n}^Y))= \E_{\delta_0} (\P_{Y_{R_n} \delta_{0}}(Y_{2^{-n\xi}}>0))\\
=&\E_{\delta_0} (1-\exp(-6Y_{R_n} 2^{2n\xi}))\\
\leq& \E_{\delta_0}(6Y_{R_n} 2^{2n\xi})=6(\frac{1}{2^n})^{1-2\xi},
\end{align*}
where the second line holds by the strong Markov property of $Y$, and the third line uses \eqref{e2.01}. By Borel-Cantelli Lemma, w.p.1 there is some $N_1(\omega)<\infty$ such that 
\begin{align}\label{e3.1}
|R_n-\rR|\leq (\frac{1}{2^n})^\xi, \ \forall n\geq N_1.
\end{align}
For any fixed $\gamma>3$, pick $0<\xi<1/2$ such that $\gamma \xi >3/2$. Let $\beta=\gamma \xi>3/2$ in \eqref{e2.0} and define $N(\omega)=N_0(\omega) \vee N_1(\omega) <\infty$. Then it follows from \eqref{e3.1} that
\begin{align}\label{e3.2}
|R_n-\rR|^\gamma \leq  (\frac{1}{2^n})^{ \gamma\xi}, \ \forall n\geq N\geq N_1.
\end{align}
 For all $R_N\leq x < \rR$, there is some $n\geq N$ such that $R_n\leq x< R_{n+1}$. Now use \eqref{e2.0} with $n\geq N\geq (N_0 \vee N_1)$ to get 
\begin{align*}
|L^x-L^\rR|=L^x&\geq \inf_{0<y<R_{n+1}} L^y>2^{-\gamma \xi (n+1)} \geq 2^{-\gamma/2} (\frac{1}{2^n})^{\gamma \xi}\\
 &\geq  2^{-\gamma/2} |R_n-\rR|^\gamma \geq 2^{-\gamma/2} |x-\rR|^\gamma,
\end{align*}
where the second last inequality is by \eqref{e3.2}. The proof is completed by choosing $\delta=\rR-R_N>0$.
\end{proof}

\section{The case under the canonical measure}
In this paper we use Le Gall's Brownian snake approach to study super-Brownian motion under the canonical measure. 
Define $\cW=\cup_{t\ge 0} C([0,t],\R^d)$, equipped with the metric given in Chp IV.1 of [Leg99a], and denote by $\zeta(w)=t$ the lifetime of $w\in C([0,t],\R^d)\subset\cW$. The Brownian snake $W=(W_t, t\geq 0)$ constructed in Ch. IV of [Leg99a] is a $\cW$-valued continuous strong Markov process and we denote by $\N_{x_0}$ the excursion measure of $W$ away from the trivial path ${x_0}$ for $x_0\in\R^d$ with zero lifetime. The law of $X=X(W)$ under $\N_{x_0}$, constructed in Theorem IV.4 of [Leg99a],  is the canonical measure of super-Brownian motion described in the introduction (also denoted by $\N_{x_0}$). For our purpose it suffices to note that if $\Xi=\sum_{i\in I}\delta_{W_i}$ is a Poisson point process on the space of continuous $\cW$-valued paths with intensity $\N_{x_0}(dW)$, then
\begin{equation*}
X_t(W)=\sum_{i\in I}X_t(W_i)=\int X_t(W)\Xi(dW),\ t>0, 
\end{equation*}
has the law, $\P_{\delta_{x_0}}$, of a super-Brownian motion $X$ starting from $\delta_{x_0}$. Compared to \eqref{e1.21}, (2.19) of [MP17] implies that the local time $L^x$ may also be decomposed as 
\begin{equation}\label{e0.1}
L^x(W)=\sum_{i\in I}L^x(W_i)=\int L^x(W)\Xi(dW).
\end{equation}
Under the excursion measure $\N_{x_0}$, let $\sigma(W)=\inf\{t\geq 0: \zeta_t=0\}>0$ be the length of the excursion path where $\zeta_t=\zeta(W_t)$ is the life time of $W_t$ and $\hat W_t=W_t(\zeta_t)$ be the ``tip'' of the snake at time $t$. Then (2.20) of [MP17] implies that for any measurable function $\phi\geq 0$ ,
\begin{align}\label{e1.31}
\int_0^\infty X_s(\phi) ds=\int L^x \phi(x) dx=\int_0^\sigma \phi(\hat W_s) ds.
\end{align}

\begin{proof}[Proof of Theorem \ref{t4}]
Let $\rR=\sup \{x\geq 0: L^x>0\}$ and $\lL=\inf \{x\leq 0: L^x>0\}$. First we show that $L^0>0$, $\N_0$-a.e., and then by Theorem 1.2 of [Hong18], the continuity of local times under $\N_0$ in $d=1$ would imply that $\lL<0<\rR$, $\N_0$-a.e..

Define the occupation measure $\cZ$ by $\cZ(A)=\int_0^\sigma 1_A(\hat W_s) ds$ for all Borel measurable set $A$ on $\R$. Then \eqref{e1.31} implies that under $\N_{x_0}$, the local time $L^x$ coincides with the density function of the occupation measure $\cZ$, which we denote by $L^x(\cZ)$. By the Palm measure formula for $\cZ$ (see Proposition 16.2.1 of [Leg99b]) with $F(y,\cZ)=\exp(-\lambda L^0(\cZ))$ for any $\lambda>0$, we see that
\begin{align}\label{e1.5}
\N_0\Big(\cZ(1) 1(L^0=0)\Big)=&\lim_{\lambda \to \infty} \N_0\Big(\cZ(1) \exp(-\lambda L^0(\cZ))\Big)\\
=&\lim_{\lambda \to \infty} \int_0^\infty da \int P_0^a(dw) E^{(w)}\Big(\exp(-\lambda \int L^0(\cZ(\omega)) \mN(dtd\omega)\Big)\nonumber\\
=&\lim_{\lambda \to \infty} \int_0^\infty da \int P_0^a(dw)\exp\Big(-\int_0^{\zeta(w)}  \N_{w(t)}\big(1-\exp(-\lambda L^0)\big) dt\Big),\nonumber
\end{align}
where $P_0^a$ is the law of Brownian motion in $\R$ started at $0$ and stopped at time $a$ and for each $w$ under $P_0^a$, the probability measure $P^{(w)}$ is defined on an auxiliary probability space and such that under $P^{(w)}$, $\mN(dtd\omega)$ is a Poisson point measure with intensity $1_{[0,\zeta(w)]}(t) dt \N_{w(t)}(d\omega)$. Note here we have taken our branching rate for $X$ to be one and so our constants will differ from those in [Leg99b]. For each $w$ under $P_0^a$, we have $\zeta(w)=a$. Therefore the left-hand side of \eqref{e1.5} is equal to
\begin{align*}
\int_0^\infty da \int P_0^a(dw) \exp\Big(-\int_0^{a}  \N_{w(t)}\big(L^0>0\big) dt\Big) =& \int_0^\infty da \int P_0^a(dw) \exp\Big(-\int_0^{a} \frac{6}{|w(t)|^2} dt\Big),
\end{align*}
the last by (2.12) of [MP17]. By Levy's modulus of continuity, we have $\int_0^{a} 6/|w(t)|^2 dt=\infty$, $P_0^a$-a.s. for each $a>0$ and hence the above implies $\N_0\Big(\cZ(1) 1(L^0=0)\Big)=0$. Since $\cZ(1)=\sigma>0$, $\N_0$-a.e., we have 
\begin{align}
L^0>0,\quad \N_0-a.e..
\end{align}

Now we will show that $L^x$ is strictly positive on $(\lL,\rR)$. Fix $\varepsilon>0$ and let $L=(L^x, x>\varepsilon)$. Note that $\rR \leq \varepsilon$ implies $L^x\equiv 0$ for all $x>\varepsilon$ by definition. Then the canonical decomposition \eqref{e0.1} implies that under $\P_{\delta_0}$, $(L,N_\varepsilon)$ is equal in law to $(\sum_{i=1}^{N_\varepsilon} L_i,N_\varepsilon)$, where $N_\varepsilon$ is a Poisson random variable with parameter $\N_0(\rR>\varepsilon)<\infty$ and given $N_\varepsilon$, $(L_i=(L_i^x, x>\varepsilon))_{i \in \N}$ are i.i.d. with law $\N_0(L\in \cdot\big| \rR>\varepsilon)$. Theorem A implies that 
\begin{align*}
0&=\P_{\delta_0}(N_\varepsilon=1; \exists \varepsilon<x<\rR, L^x=0)=\P_{\delta_0}(N_\varepsilon=1) \N_0( \exists \varepsilon<x<\rR, L^x=0\big| \rR>\varepsilon). 
\end{align*}
Therefore we have $\N_0( \exists \varepsilon<x<\rR, L^x=0; \rR>\varepsilon)=0$ for all $\varepsilon>0$. Let $\varepsilon \downarrow 0$ to see that $\N_0( \exists 0<x<\rR, L^x=0; \rR>0)=0$. Since $\rR>0$, $\N_0$-a.e., we have $L^x>0, \forall 0<x<\rR$, $\N_0$-a.e.. Use symmetry to conclude for $\lL$.
\end{proof}

\begin{proof}[Proof of Theorem \ref{t5}]
Fix $\varepsilon>0$ and let $L=(L^x, x>\varepsilon)$. Use the same canonical decomposition above to see that under $\P_{\delta_0}$, $(L,N_\varepsilon)$ is equal in law to $(\sum_{i=1}^{N_\varepsilon} L_i,N_\varepsilon)$, where $N_\varepsilon$ and $(L_i=(L_i^x, x>\varepsilon))_{i \in \N}$ are as above. For any $\gamma \in (0,3)$,  use Corollary \ref{c2.4} to see that
\begin{align*}
0&=\P_{\delta_0}(N_\varepsilon=1; \exists x_n> \varepsilon,\  x_n \uparrow \rR, \ s.t.\ L^{x_n}>2^3 (\rR-x_n)^\gamma \text{ i.o.})\\
&=\P_{\delta_0}(N_\varepsilon=1) \N_0( \exists x_n> \varepsilon,\  x_n \uparrow \rR, \ s.t.\ L^{x_n}>2^3 (\rR-x_n)^\gamma \text{ i.o.} \big|\rR>\varepsilon),
\end{align*}
where i.o. represents infinitely often. Therefore we have $\N_0( \exists x_n> \varepsilon,\  x_n \uparrow \rR, \ s.t.\ L^{x_n}>2^3 (\rR-x_n)^\gamma \text{ i.o. }; \rR>\varepsilon)=0$ for all $\varepsilon>0$. Let $\varepsilon \downarrow 0$ to see that $\N_0( \exists x_n> 0,\  x_n \uparrow \rR, \ s.t.\ L^{x_n}>2^3 (\rR-x_n)^\gamma \text{ i.o. }; \rR>0)=0$. Since $\rR>0$, $\N_0$-a.e., we have $\N_0$-a.e. that $\exists \delta>0, \ s.t.\ \forall 0<\rR-x<\delta,\ L^x\leq 2^3 (\rR-x)^\gamma$. Use symmetry to conclude for $\lL$ and hence Theorem \ref{t1} holds if $\P_{\delta_0}$ is replaced with $\N_0$. The proof of Theorem \ref{t2} under $\N_0$ follows by similar arguments and Corollary \ref{t3} under $\N_0$ follows immediately from Theorem \ref{t2} under $\N_0$.
\end{proof}

\end{document}